\newtheorem{thm}{Theorem}[section]
\newtheorem{cor}[thm]{Corollary}
\newtheorem{lem}[thm]{Lemma}
\newtheorem{prop}[thm]{Proposition}
\theoremstyle{definition}
\newtheorem{defn}[thm]{Definition}
\newtheorem{rem}[thm]{Remark}
\newtheorem{ex}[thm]{Example}
\numberwithin{equation}{section}
\theoremstyle{plain}
\newcommand{\be}{\begin{equation}}
	\newcommand{\en}{\end{equation}}
\newcommand{\Lc}{{\mc L}}
\newcommand{\bei}{\begin{itemize}}
	\newcommand{\eni}{\end{itemize}}
\newcommand{\ip}[2]{\langle{#1}|{#2}\rangle}
\newcommand{\C}{\mathfrak{C}}
\numberwithin{equation}{section}
\newcommand{\mb}{\mathbb}
\newcommand{\mc}{\mathcal}
\newcommand{\eul}{\mathfrak}
\newcommand{\A}{\eul A}
\newcommand{\Ao}{{\eul A}_{\scriptscriptstyle 0}}
\newcommand{\D}{\mc D}
\newcommand{\QA}{{\mathcal Q}_{\Ao}^{\,\C}(\A)}
\newcommand{\IA}{{\mathcal I}_{\Ao}^{\,\C}(\A)}
\newcommand{\id}{{\sf e }}
\newcommand{\DD}{\mathfrak D}
\newcommand{\B}{{\eul B}}
\newcommand{\ad}{^{\mbox{\scriptsize $\dag$}}}
\newcommand{\X}{{\mathfrak X}}
\newcommand{\mult}{\,{\scriptstyle \square}\,}
\newcommand{\vp}{\Phi}
\newcommand{\LXH}{{\mathcal L}\ad(\DD,\X)}
\newcommand{\LDH}{{\mathcal L}\ad(\D,\H)}
\newcommand{\LDXr}{{\mathcal L}\ad(\DD_\pi,\X)}
\newcommand{\idop}{{\mb I}}
\newcommand{\LD}{{\mc L}\ad(\D)}
\newcommand{\LDb}{{\mc L}\ad(\D)_b}
\numberwithin{equation}{section}
\newcommand{\cls}[1]{\Lambda_\vp(#1)}
\newcommand{\ppi}{\Pi}
\def\H{\mc H}
\def\x{\relax\ifmmode {\mbox{*}}\else*\fi}
\newcommand{\BH}{\mc{B}(\mathcal{H})}
\newcommand{\ze}{_{\scriptscriptstyle 0}}
\newcommand{\up}{\raisebox{0.7mm}{$\upharpoonright$}}
\newcommand{\lcqa}{locally convex quasi *-algebra }
\newcommand{\vertiii}[1]{{\left\vert\kern-0.25ex\left\vert\kern-0.25ex\left\vert #1 
					\right\vert\kern-0.25ex\right\vert\kern-0.25ex\right\vert}}
\definecolor{Magenta}{rgb}{1,0,1}
\newcommand{\BibTeX}{B\kern-0.1emi\kern-0.017emb\kern-0.15em\TeX}
\newcommand{\XYpic}{$\mathrm{X\kern-0.3em\raisebox{-0.18em}{Y}}$-$\mathrm{pic}\,$}
\newcommand{\ed}{\end{document}}
\begin{document}

%
%
%
%
%
%
%
%
%

	\title[GNS-construction for positive C*-valued sesquilinear maps]{GNS-construction for positive C*-valued sesquilinear maps on a quasi *-algebra}
	\author[G. ~Bellomonte]{Giorgia Bellomonte}

%
\address{Dipartimento di Matematica e Informatica, Universit\`a degli Studi  di Palermo, Via Archirafi n. 34,  I-90123 Palermo, Italy}
\email{giorgia.bellomonte@unipa.it} 

%

\author[S. ~Ivkovi\'{c}]{Stefan Ivkovi\'{c}}

\address{Mathematical Institute of the Serbian Academy of Sciences and Arts, Kneza Mihaila 36, 11000 Beograd, Serbia}
\email{stefan.iv10@outlook.com}

\author[C. ~Trapani]{Camillo Trapani}
\address{Dipartimento di Matematica e Informatica, Universit\`a degli Studi  di Palermo, Via Archirafi n. 34,  I-90123 Palermo, Italy}
\email{camillo.trapani@unipa.it} 

\subjclass{Primary 46K10, 47A07, 16D10 ; }
\keywords{GNS representation, positive sesquilinear C*-valued maps, quasi normed spaces, C*-modules}
\date{\today}
\begin{abstract}
The GNS construction for positive invariant sesquilinear forms on  quasi *-algebras $(\A,\Ao)$ is generalized to a class of positive sesquilinear maps from $\A\times \A$ into a C*-algebra $\C$. The result is a *-representation taking values in a space of operators acting on a certain quasi normed $\C$-module.
\end{abstract}
\label{page:firstblob}
\maketitle

	\section{Introduction and  Basic definitions} 
The Gelfand-Naimark-Segal construction is nowadays a common tool when studying the structure properties of locally convex *-algebras, since it provides *-representations of the given *-algebra into some space of  operators acting in Hilbert space.

The basic idea consists in building up *-representations starting from a positive linear functional on a *-algebra, constructing a Hilbert space from it and then defining operators in natural way using the multiplication of the given *-algebra.

This construction was given first in the case of C*-algebras and produces bounded operators in Hilbert spaces, but the paper of Powers \cite{powers}, in the early 1970's, puts in evidence its generality if one is willing to pay the price of dealing with *-algebras of unbounded operators.
Since then this procedure has been generalized in very many directions and in very many ways: extensions to the case of partial *-algebras and quasi *-algebras have been considered, see \cite{ait_book} and \cite{FT_book}.
In particular, it has appeared clear that, when dealing with algebraic structures where the multiplication is only partially defined, it is convenient to replace positive linear functionals with positive sesquilinear forms enjoying certain {\em invariance} properties.

In this paper, we will analyze the possible generalization of the GNS construction for a quasi *-algebra $(\A, \Ao)$, see below for a formal definition, starting from a {\em positive sesquilinear (i.e., conjugate-bilinear) map} $\Phi$ taking its values in a C*-algebra $\C$.
In this case one expects that the image of a *-representation  is a space of operators acting on some Hilbert C*-module. As we will see in what follows this is not always the case (this depends on the  Cauchy-Schwartz-like inequality $\Phi$ satisfies) and for this reason we have introduced quasi-Banach spaces whose norm is defined by a $\C$-valued inner product, named, for short, quasi $B_\C$-spaces. 

Positive and completely positive maps on C*-algebras or Operator algebras play an important role  in many
applications such as quantum theory, quantum information, quantum probability
theory, and a lot of deep mathematical results have been obtained (see, e.g. \cite{stormer}). On the other hand, it is now long time that the C*-algebraic approach to quantum theories has
been considered as too rigid framework where casting all objects
of physical interest. For this reason several possible generalizations have been proposed:
quasi *-algebras, partial *-algebras and so on. They reveal in fact to be more suited to cover situations where unbounded operator algebras occur.
These facts provide, in our opinion, good motivations for the generalizations we are proposing here.\\

The paper is organized as follows.
In Section \ref{sec: ips appli} we analyze some properties of positive sesquilinear $\C$-valued maps, the {\em quasi-inner product} it defines on a given vector space $\X$ and study in particular the quasi $B_\C$-space it generates. Section \ref{sect_GNS_rep} is devoted to the construction of the *-representation associated to $\Phi$. This is in fact a generalization of Paschke result \cite{paschke} which is the first involving Hilbert C*-modules (as far as we know). The proofs we give are often adaptations to the case under consideration of the corresponding ones for positive sesquilinear {\em forms} but is not surprising at all, since all generalizations of the GNS representation  are variants of the beautiful construction made by Gelfand, Naimark and Segal. The main results of the paper are Theorem \ref{thm_rep} and Corollary \ref{moreGNS alg} which provides a representation of positive C*-valued maps on unital *-algebras.  Moreover, Corollary \ref{stinesp}, and Corollary \ref{cor: 3.12} illustrate also the applications to positive linear C*-valued maps on (quasi) *-algebras.
Examples coming mostly from the theory of noncommutative integration are discussed.

\medskip
To keep the paper sufficiently self-contained we begin with some preliminary definitions and facts.


\medskip  
A {\em quasi *-algebra} $(\A, \Ao)$ is a pair consisting of a vector space $\A$ and a *-algebra $\Ao$ contained in $\A$ as a subspace and such that
\begin{itemize}
	\item $\A$ carries an involution $a\mapsto a^*$ extending the involution of $\Ao$;
	\item $\A$ is  a bimodule over $\A_0$ and the module multiplications extend the multiplication of $\Ao$. In particular, the following associative laws hold:
	\begin{equation}\notag \label{eq_associativity}
		(xa)y = x(ay); \ \ a(xy)= (ax)y, \quad \forall \ a \in \A, \  x,y \in \Ao;
	\end{equation}
	\item $(ax)^*=x^*a^*$, for every $a \in \A$ and $x \in \Ao$.
\end{itemize}

The
\emph{identity} of $(\A, \Ao)$, if any, is a necessarily unique element $\id\in \Ao$, such that
$a\id=a=\id a$, for all $a \in \A$.

We will always suppose that
\begin{align*}
	&ax=0, \; \forall x\in \Ao \Rightarrow a=0 \\
	&ax=0, \; \forall a\in \A \Rightarrow x=0. 
\end{align*}
Clearly, both these conditions are automatically satisfied if $(\A, \Ao)$ has an identity $\id$.
\begin{defn}
A  quasi *-algebra $(\A, \Ao)$ is said to be  {\em locally convex} if $\A$ is a locally convex vector space, with a topology $\tau$ enjoying the following properties
\begin{itemize}
	\item[{\sf (lc1)}] $x\mapsto x^*$, \ $x\in\A\ze$,  is continuous;
	\item[{\sf (lc2)}] for every $a \in \A$, the maps  $x \mapsto ax$ and
	$x \mapsto xa$, from $\A\ze$ into $\A$, $x\in \A\ze$, are continuous;
	\item[{\sf (lc3)}] $\overline{\Ao}^\tau = \A$; i.e., $\Ao$ is dense in $\A[\tau]$.
\end{itemize}
The involution of $\Ao$ extends by continuity to $\A$. 
Moreover, if $\tau$ is a norm topology, with norm $\|\cdot\|$, and
\begin{itemize}
	\item[{\sf (bq*)}] $\|a^*\|=\|a\|, \; \forall a \in \A$
\end{itemize}
then, $(\A, \Ao)$ is called a {\em normed  quasi *-algebra} and a {\em Banach  quasi *-algebra} if the normed vector space $\A[\|\cdot\|]$ is complete.
\end{defn}

{The simplest example of a \lcqa  is obtained
	by taking the completion $\A:= \widetilde{\A_0}[\tau]$ of a locally convex *-algebra
	$\A_0[\tau]$}
with separately (but not jointly) continuous multiplication (this was, in fact, the case considered at an early stage of the theory, in view of applications to quantum physics).

\medskip 

In the whole paper, $\C$ will denote a C*-algebra with unit ${\it 1}$ and norm $\|\cdot\|_\C$ and $\C^+$ its positive cone.
If $\omega$ is a continuous linear functional on $\C$, we denote by $\|\omega\|_\C^*$ the norm in the Banach dual of $\C$.
Let ${\mc S}(\C)$ denote the set of all positive linear functionals on $\C$ such that $\|\omega\|_\C^*=1$. We recall that \begin{equation*}\label{eqn_norm_one}\|z\|_\C^2 = \sup_{\omega\in {\mc S}(\C)}\omega(z^*z).\end{equation*}
In particular, if $z$ is a normal element of $\C$, 
\begin{equation}\label{eqn_norm_normal}
	\|z\|_\C = \sup_{\omega\in {\mc S}(\C)}|\omega(z)|.
\end{equation}
Hence, if $\C$ is a commutative C*-algebra,
\begin{equation}\label{eqn_norm_two}\|z\|_\C = \sup_{\omega\in {\mc S}(\C)}|\omega(z)|, \quad \forall z\in \C.\end{equation}

\section{Positive  sesquilinear $\C$-valued maps}
\label{sec: ips appli} In this section we will study {\em positive  sesquilinear $\C$-valued maps on   $\X\times\X$} when $\X$ is either simply a vector space or a right (left) module on $\C$, or a locally convex quasi *-algebra which is a $\C$-module. Throughout the section we progressively add some hypotheses on $\vp$ to get more results.
\subsection{The case of a vector space}
Let $\X$ be a complex vector space  
and $\vp$ a  positive  sesquilinear $\C$-valued map on   $\X\times\X$  $$\vp:(a,b)\in\X\times\X\to\vp(a,b)\in\C;$$ i.e., a map with the properties  \begin{itemize}
	\item[$i)$] $\vp(a,a)\in\C^+$,
	\item[$ii)$]$\vp(\alpha a+\beta b,\gamma c)=\overline{\gamma}[\alpha\vp( a,c)+\beta \vp(b,c)]$,  
\end{itemize}
with $a,b,c \in\X$ and $\alpha,\beta,\gamma\in\mathbb{C}$. \\
The positive  sesquilinear $\C$-valued map $\vp$ is called {\em faithful} if $$\vp(a,a)=0 \;\Rightarrow\; a=0.$$

\smallskip
By property $i)$ it follows that \begin{itemize}
	\item[$iii)$]  $\vp(b,a)=\vp(a,b)^*$, for all $a,b\in\X$.
\end{itemize} In fact,
let $\alpha\in\mathbb{C}$ and $a,b\in\X$, then
\begin{equation*}
	0\leq \vp(a+\alpha b, a +\alpha b)=\vp(a,a)+|\alpha|^2\vp(b,b)+ \alpha\vp(a,b)+\overline{\alpha}\vp(b,a)
\end{equation*}
Since $\vp(a+\alpha b, a +\alpha b)$, $\vp(a,a)$ and $\vp(b,b)$ are positive hence hermitian, so it is $\alpha\vp(a,b)+\overline{\alpha}\vp(b,a)$; if we choose $\alpha=1$ and $\alpha=i$ we get both $$\vp(a,b)+\vp(b,a)=(\vp(a,b)+\vp(b,a))^*=\vp(a,b)^*+\vp(b,a)^*$$
and $$i\vp(a,b)-i\vp(b,a)=(i\vp(a,b)-i\vp(b,a))^*=-i\vp(a,b)^*+i\vp(b,a)^*$$ hence $$\vp(a,b)-\vp(b,a)=-\vp(a,b)^*+\vp(b,a)^*$$ if we add the first and the third equality we get $\vp(a,b)=\vp(b,a)^*$.

\begin{defn} \label{defn_CSmap} Let $\vp$ be a positive sesquilinear $\C$- valued map. We say that $\vp$ satisfies a Cauchy-Schwarz  inequality if \begin{equation}\label{eq: CS} \|\vp(a,b)\|^2_\C \leq \|\vp(a,a)\|_\C\|\vp(b,b)\|_\C, \quad \forall a,b \in \X.\end{equation}
\end{defn}
\begin{ex} \label{ex_zero}  
Let $\X=\C$ and define
$$\vp(a,b)= b^*a .$$
It is clear that $\vp$ is a  positive sesquilinear map  of $\C\times \C$ into $\C$. $\vp$ satisfies \eqref{eq: CS}: 
\begin{align*}
	\|\vp(a,b)\|_\C^2 &=\|b^*a\|_\C^2 \leq \|b\|_\C^2\|a\|_\C^2\\ &= \|a^*a\|_\C\|b^*b\|_\C= \|\vp(a,a)\|_\C\|\vp(b,b)\|_\C, \quad \forall a,b \in \C.
\end{align*} 
\end{ex}

\begin{lem}\label{lemma 1} Let $\vp$ be a positive  sesquilinear $\C$-valued map $\vp$ on   $\X\times\X$.  Then,
	\begin{itemize}
		\item[(i)] for all $a,b\in\X$, $$\|\vp(a,b)\|_\C\leq2\|\vp(a,a)\|_\C^{1/2}\|\vp(b,b)\|_\C^{1/2}.$$ 
		\item[(ii)]If $\C$ is commutative, then $\vp$ satisfies the Cauchy-Schwarz inequality.
	\end{itemize}  
\end{lem}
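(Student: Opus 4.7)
The plan is to reduce both parts to the classical scalar Cauchy--Schwarz inequality by composing $\vp$ with a positive linear functional $\omega\in\mathcal S(\C)$. The key observation is that for any such $\omega$, the map $(a,b)\mapsto \omega(\vp(a,b))$ is an ordinary $\mathbb C$-valued positive sesquilinear form on $\X\times\X$: sesquilinearity is inherited from $\vp$, and positivity follows because $\vp(a,a)\in\C^+$ and $\omega$ is positive. Applying the scalar Cauchy--Schwarz inequality and using $\omega(z)\le \|\omega\|_\C^*\|z\|_\C=\|z\|_\C$ on the right-hand side, one obtains
\[
|\omega(\vp(a,b))|\le \|\vp(a,a)\|_\C^{1/2}\|\vp(b,b)\|_\C^{1/2},\quad\forall\, \omega\in\mathcal S(\C).
\]

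For (i), the difficulty is that $\vp(a,b)$ need not be normal, so one cannot apply \eqref{eqn_norm_normal} directly. I would therefore write the Cartesian decomposition $\vp(a,b)=h+ik$ with
\[
h=\tfrac{1}{2}\bigl(\vp(a,b)+\vp(b,a)\bigr),\qquad k=\tfrac{1}{2i}\bigl(\vp(a,b)-\vp(b,a)\bigr),
\]
which are self-adjoint by property $iii)$. Recalling that a positive functional $\omega$ on $\C$ satisfies $\omega(z^*)=\overline{\omega(z)}$, we get $\omega(h)=\mathrm{Re}\,\omega(\vp(a,b))$ and $\omega(k)=\mathrm{Im}\,\omega(\vp(a,b))$, whence
\[
|\omega(h)|,\ |\omega(k)|\ \le\ |\omega(\vp(a,b))|\ \le\ \|\vp(a,a)\|_\C^{1/2}\|\vp(b,b)\|_\C^{1/2}.
\]
Since $h$ and $k$ are self-adjoint hence normal, \eqref{eqn_norm_normal} converts these into norm bounds $\|h\|_\C,\|k\|_\C\le \|\vp(a,a)\|_\C^{1/2}\|\vp(b,b)\|_\C^{1/2}$, and the triangle inequality $\|\vp(a,b)\|_\C\le \|h\|_\C+\|k\|_\C$ yields the factor $2$ in the statement.

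For (ii), the obstruction of non-normality disappears: when $\C$ is commutative every element is normal, so by \eqref{eqn_norm_two} the identity $\|z\|_\C=\sup_{\omega\in\mathcal S(\C)}|\omega(z)|$ applies to the arbitrary element $\vp(a,b)$ itself. Squaring and using the scalar Cauchy--Schwarz estimate established above gives
\[
\|\vp(a,b)\|_\C^2=\sup_{\omega\in\mathcal S(\C)}|\omega(\vp(a,b))|^2\le \sup_{\omega\in\mathcal S(\C)}\omega(\vp(a,a))\,\omega(\vp(b,b))\le \|\vp(a,a)\|_\C\|\vp(b,b)\|_\C,
\]
where the last step uses that both factors are non-negative and bounded by the corresponding norms. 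The main obstacle is conceptual rather than technical: understanding precisely why the $\C$-commutativity removes the spurious factor $2$ (namely, it forces $\vp(a,b)$ to be normal, so no Cartesian decomposition is needed).
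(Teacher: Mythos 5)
Your proof is correct, and part (ii) matches the paper's argument essentially verbatim (compose with states, apply scalar Cauchy--Schwarz, use \eqref{eqn_norm_two}). For part (i), however, you take a genuinely different route. The paper realizes $\C$ as a C*-subalgebra of $\B(\H)$ via Gelfand--Naimark and bounds $|\ip{\vp(a,b)x}{y}|$ for unit vectors $x,y$ through the polarization identity applied to the quadratic form $x\mapsto\ip{\vp(a,b)x}{x}$, the factor $2$ emerging from $\sum_{k=0}^3\|x+i^ky\|_\H^2=4(\|x\|_\H^2+\|y\|_\H^2)=8$. You instead stay entirely inside $\C$: the Cartesian decomposition $\vp(a,b)=h+ik$ (legitimate because property $iii)$ gives $\vp(b,a)=\vp(a,b)^*$, so $h,k$ are self-adjoint) reduces the problem to two normal elements, each of which is controlled by \eqref{eqn_norm_normal}, and the triangle inequality produces the same constant $2$. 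The two arguments are morally dual --- vector states $\ip{\,\cdot\,x}{x}$ are states, and polarization is the Hilbert-space shadow of the Cartesian decomposition --- but yours is intrinsic (no faithful representation needed) and slightly shorter, while the paper's makes the mechanism behind the constant visible at the level of vectors. One cosmetic point: the displayed scalar estimate you derive should read $|\omega(\vp(a,b))|^2\le\omega(\vp(a,a))\,\omega(\vp(b,b))\le\|\vp(a,a)\|_\C\|\vp(b,b)\|_\C$ before taking square roots; as written you pass directly to the square-rooted form, which is what you actually use, so nothing is lost.
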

\begin{proof}
	Let  $\omega$ be a positive linear functional on $\C$ and let $\varphi:\X\times \X\to\mathbb{C}$ be given by $$\varphi(a,b)=\omega(\vp(a,b)),\quad \forall a,b\in\X.$$ 
	Since $\vp$ is sesquilinear and positive and by linearity  and positivity of $\omega$, it follows that $\varphi$ is a positive  sesquilinear form  on  $\X\times \X$.  Hence, the classical Cauchy-Schwarz inequality holds true: for all $a,b\in\X$ we have that 	$$|\omega(\vp(a,b))|^2\leq\omega(\vp(a,a))\omega(\vp(b,b)),\quad  \forall a,b\in\X.$$
	Then, by \eqref{eqn_norm_normal}, taking the supremum over $\omega\in {\mc S}(\C)$, we get
	the inequality
	\begin{equation*}\label{eqn_ineq_main}
		|\omega(\vp(a,b))|\leq \|\vp(a,a)\|_\C	\|\vp(b,b)\|_\C.
	\end{equation*}
	If $\C$ is commutative, using \eqref{eqn_norm_two}, we get 
	$$\|\vp(a,b)\|_\C\leq \|\vp(a,a)\|_\C^{1/2}\|\vp(b,b)\|_\C^{1/2},  \quad \forall a,b\in\X.$$
	This proves (ii).\\
	Let us come back to the general case.
	Without loss of generality, we can consider $\C$ as a C*-subalgebra of $\B(\H)$ (for some Hilbert space $\H$); thus, for all $x\in\H$,  \begin{eqnarray*}
		|\ip{\vp(a,b)x}{x}|^2&\leq&\ip{\vp(a,a)x}{x}\ip{\vp(b,b)x}{x}\\&\leq&\|\vp(a,a)\|\|\vp(b,b)\|\|x\|_\H^4.
	\end{eqnarray*} Let now $x,y\in\H$ with $\|x\|_\H=\|y\|_\H=1$. Then,  by the polarization identity
	\begin{eqnarray*}
		|\ip{\vp(a,b)x}{y}|&=&\frac{1}{4}\left|\sum_{i=0}^3i^k\ip{\vp(a,b)(x+i^ky)}{x+i^ky}\right|\\&\leq&\frac{1}{4}\sum_{k=0}^3\left|\ip{\vp(a,b)(x+i^ky)}{x+i^ky}\right|\\&\leq&\frac{1}{4}\sum_{k=0}^3\sqrt{\|\vp(a,a)\|\|\vp(b,b)\|}\|x+i^ky\|_\H^2\\&\leq&\sqrt{\|\vp(a,a)\|\|\vp(b,b)\|}(\|x\|_\H^2+\|y\|_\H^2)\\&=&2\sqrt{\|\vp(a,a)\|\|\vp(b,b)\|},
	\end{eqnarray*} since $\sum_{k=0}^3\|x+i^ky\|_\H^2=4(\|x\|_\H^2+\|y\|_\H^2)$. Taking now the supremum over all unit vectors $x,y\in\H$, we get $$\|\vp(a,b)\|_\C\leq 2\|\vp(a,a)\|_\C^{1/2}\|\vp(b,b)\|_\C^{1/2}.\qedhere$$
\end{proof}

The Stinespring theorem \cite[Theorem 1.2.7]{stormer} yields a inequality for positive  linear $\C$-valued maps on  C*--algebras, see  \cite[Theorem 1.3.1]{stormer}. Motivated by that result, we provide the following corollary.

\begin{cor}\label{stinesp}
	Let $\A$ be a *--algebra with unit
	$\id$ and let $\omega$ be a positive  linear $\C$-valued map on $\A$. Then,  $$4\|\omega(\id)\|_\C\,\|\omega(a^*a)\|_\C\geq\|\omega(a)\|_\C^2=\|\omega(a^*)\|_\C\,\|\omega(a)\|_\C, \quad\forall a\in\A.$$\end{cor}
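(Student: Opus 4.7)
The plan is to realize $\omega$ as the ``diagonal'' of a positive sesquilinear $\C$-valued map and then quote Lemma \ref{lemma 1}(i) directly. Concretely, I would define
\[
\vp(a,b) := \omega(b^*a), \qquad a,b\in \A.
\]
Since $\omega$ is linear and $\C$-valued, $\vp$ is sesquilinear in the sense used in Section \ref{sec: ips appli}; and since $\omega$ is positive, $\vp(a,a)=\omega(a^*a)\in \C^+$. Hence $\vp$ is a positive sesquilinear $\C$-valued map on $\A\times \A$ and the hypotheses of Lemma \ref{lemma 1} are satisfied.

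Next, I would specialize the inequality in Lemma \ref{lemma 1}(i) by taking $b=\id$. This yields
\[
\|\omega(a)\|_\C = \|\vp(a,\id)\|_\C \leq 2\,\|\vp(a,a)\|_\C^{1/2}\|\vp(\id,\id)\|_\C^{1/2}
= 2\,\|\omega(a^*a)\|_\C^{1/2}\|\omega(\id)\|_\C^{1/2}.
\]
Squaring gives the first inequality $\|\omega(a)\|_\C^2 \leq 4\,\|\omega(\id)\|_\C\,\|\omega(a^*a)\|_\C$.

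For the remaining equality, I would use property $iii)$ already established for any positive sesquilinear $\C$-valued map: $\vp(b,a)=\vp(a,b)^*$. Applied with $b=\id$, this reads $\omega(a^*)=\vp(\id,a)=\vp(a,\id)^*=\omega(a)^*$. The $\mathrm C^*$-property of the norm on $\C$ then yields $\|\omega(a^*)\|_\C=\|\omega(a)^*\|_\C=\|\omega(a)\|_\C$, whence $\|\omega(a^*)\|_\C\|\omega(a)\|_\C=\|\omega(a)\|_\C^2$.

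I do not anticipate a genuine obstacle: the result is essentially a translation of Lemma \ref{lemma 1}(i) under the identification $\vp(a,b)=\omega(b^*a)$, and of the hermiticity property $\vp(b,a)=\vp(a,b)^*$ into the statement $\omega(a^*)=\omega(a)^*$. The only point requiring a moment of care is verifying that $\vp$ as defined is sesquilinear in the conjugation convention adopted in the paper and that the positivity of $\omega$ really means $\omega(a^*a)\in\C^+$ (so that $\vp(a,a)\in\C^+$); both are immediate from the standing definitions.
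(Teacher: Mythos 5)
Your proposal is correct and follows exactly the paper's route: the paper's proof is precisely ``apply Lemma \ref{lemma 1} to $\vp(a,b)=\omega(b^*a)$,'' and you have simply filled in the details (taking $b=\id$ in Lemma \ref{lemma 1}(i) and using the hermiticity property $\vp(b,a)=\vp(a,b)^*$ for the equality part). No issues.
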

\begin{proof}It suffices to apply Lemma \ref{lemma 1} to $\vp(a,b)=\omega(b^*a)$, $a,b\in\A$.
	\end{proof}
	
	\begin{rem}
		If $\vp(a,b)=\omega(b^*a)$, $a,b\in\A$ satisfies the Cauchy Schwarz inequality in the norm (e.g. if either $\C$ is commutative or if $\A$ is a $\C$-module and $\omega$ is $\C$-linear) then$$\|\omega(\id)\|_\C\,\|\omega(a^*a)\|_\C\geq\|\omega(a^*)\|_\C\,\|\omega(a)\|_\C, \quad\forall a\in\A.$$ 
	\end{rem}



\begin{defn}\label{defn: C*-valued quasi inner product}
Let $\X$ be a vector space. A  faithful positive  sesquilinear $\C$-valued map $\vp$ on   $\X\times\X$ is said to be a {\em C*-valued quasi inner product} and we often will write $\ip{a}{b}_\vp:=\vp(a,b)$,  $a,b\in\X$.
\end{defn}

Let $\vp$ be a faithful positive  sesquilinear $\C$-valued map    $\vp:\X\times\X\to \C$.  
A C*-valued quasi inner product induces a quasi norm $\|\cdot\|_\vp$ on $\X$:  

\label{vp norm}  \begin{equation*}
\label{eq: norm phi def}
\|a\|_\vp:=\sqrt{\|\ip{a}{a}_\vp\|_\C}=\sqrt{\|\vp(a,a)\|_\C},\quad a\in\X.\end{equation*} 

\medskip
This means that
\begin{align} 	
& \| a\|_\vp\geq0, \qquad \forall a\in\X \mbox{ and }\| a\|_\vp=0\Leftrightarrow a=0,\nonumber\\
& \|\alpha a\|_\vp=|\alpha|\| a\|_\vp, \qquad \forall \alpha\in\mathbb{C}, a\in\X,\nonumber\\ 
&   \|a+b\|_\vp\leq\sqrt{2}(\| a\|_\vp+\| b\|_\vp), \qquad \forall a,b\in\X.\label{eqn:Q3}
\end{align}  Indeed, by Lemma \ref{lemma 1}:\label{page: quasi norm}
\begin{eqnarray*}
\|a+b\|_\vp^2&=&\|\vp(a+b,a+b)\|_\C\\&\leq&\|\vp(a,a)\|_\C+2\|\vp(a,b)\|_\C+\|\vp(b,b)\|_\C\\
&\leq&\|a\|_\vp^2+4\|a\|_\vp \, \|b\|_\vp+\|b\|_\vp^2\leq2(\|a\|_\vp+\|b\|_\vp)^2, \,\,\forall a,b\in\X.
\end{eqnarray*}

The space $\X$ is then a quasi normed space w.r.to the quasi norm $\|\cdot\|_\vp$.

\begin{defn} 
If $\X$ is complete w.r.to the quasi norm $\|\cdot\|$, $\X$ will be said a {\em quasi Banach space with $\C$-valued quasi inner product} or for short a {\em quasi $B_\C$-space}.
\end{defn}

Let $\X$ be a quasi $B_\C$-space and $\DD(X)$ a dense subspace of $\X$.
A linear map $X:\DD(X)\to \X$ is {\em $\vp$-adjointable} if there exists a linear map $X^*$ defined on a subspace $\DD(X^*)\subset \X$ such that
$$\vp(Xa,b)= \vp(a, X^*b), \quad \forall a \in \DD(X), b\in \DD(X^*).$$

Let $\DD$ be a dense subspace of $\X$ and let us consider the following families of linear operators acting on $\DD$:
\begin{align*}		{\LXH}&=\{X \mbox{ $\vp$-adjointable}, \DD(X)=\DD;\; \DD(X^*)\supset \DD\} \\	{\Lc^\dagger(\DD)}&=\{X\in \LXH: X\DD\subset \DD; \; X^*\DD\subset \DD\} \\	{\Lc^\dagger(\DD)_b} &=\{Y\in \Lc^\dagger(\DD); \, {Y} \mbox{ is bounded on $\DD$} \}.\end{align*}
The involution in $\LXH$ is defined by		$X^\dag := X^*\upharpoonright \D$, the restriction of $X^*$, the adjoint of $X$, to $\DD$.

The sets $\LD$ and ${\Lc^\dagger(\DD)_b}$ are *-algebras.

\begin{rem} \label{rem_closable}If $X\in \LXH$ then $X$ is closable. By definition $X$ is adjointable. Let $X^*$ be its adjoint with domain $\DD(X^*)$. We prove that $X^*$ is closed. Indeed, suppose that $\{u_n\}$ is a sequence in $\DD(X^*)$ such that \\ \mbox{$\|u_n-u\|_\vp \to 0$} for some $u\in \X$ and $\|X^*u_n -v\|_\vp \to 0$ for some $v\in \X$. Clearly $\|u_n-u\|_\vp \to 0$ is equivalent to $\vp(u_n-u,u_n-u)\to 0$.
Then by Lemma \ref{lemma 1}, we get, for every $y\in \X$,
 $$\|\vp(u_n-u,y)\|_\C^2 \leq 4\| \vp(u_n -u, u_n-u)\|_\C\|\vp(y,y)\|_\C\to 0.$$
Hence, for every $z\in \D$
$$ \|\vp(u_n,Xz)\|_\C= \|\vp(X^*u_n,z)\|_\C\to \|\vp(u,Xz)\|_\C.$$
On the other hand,
$$ \|\vp(X^*u_n,z)\|_\C \to \|\vp(v,z)\|_\C.$$
These relations imply that $u \in \DD(X^*)$ and $X^*u=v$. Thus $X^*$ is closed.
Now apply this result to $X^{\dag*}$ to obtain a closed extension of $X$.

\end{rem}

\begin{rem} \label{rem_partialmult}
$\LXH$ is also a {\em partial *-algebra} \cite{ait_book}  with respect to the following operations: the usual sum $X_1 + X_2 $,
the scalar multiplication $\lambda X$, the involution $ X \mapsto X\ad := X^* \up {\DD}$ and the \emph{(weak)}
partial multiplication 
defined whenever there a  exists $Y\in \LXH$ such that
$$\vp(X_2 a,X_1b)= \vp (Ya,b), \quad \forall a,b \in \DD.$$
The element $Y$, if it exists, is unique. We put $Y=X_1\mult X_2$.
\end{rem}

\bigskip
If $\vp$ is not faithful, we can consider the set $$N_\vp=\{a\in\X:\, \vp(a,a)=0_\C\}.$$
%

\begin{lem}\label{N phi subspace}
$ N_{\varphi}$ is a subspace of $\X$.
\end{lem}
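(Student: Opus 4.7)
The plan is to reduce everything to the norm inequality of Lemma \ref{lemma 1}(i), which turns the positivity condition $\vp(a,a)=0_\C$ into the much stronger statement that $\vp(a,b)=0_\C$ for every $b\in\X$.

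First I would dispose of the trivial parts: $0\in N_\vp$ since $\vp(0,0)=0_\C$ by sesquilinearity, and if $a\in N_\vp$ and $\alpha\in\mathbb{C}$ then $\vp(\alpha a,\alpha a)=|\alpha|^2\vp(a,a)=0_\C$, so $\alpha a\in N_\vp$.

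The only real content is closure under addition. The key observation is that for any $a\in N_\vp$ and any $b\in\X$, Lemma \ref{lemma 1}(i) gives
\[
\|\vp(a,b)\|_\C\leq 2\|\vp(a,a)\|_\C^{1/2}\|\vp(b,b)\|_\C^{1/2}=0,
\]
hence $\vp(a,b)=0_\C$ (and similarly $\vp(b,a)=\vp(a,b)^*=0_\C$). Once this is in hand, for $a,b\in N_\vp$ I simply expand
\[
\vp(a+b,a+b)=\vp(a,a)+\vp(a,b)+\vp(b,a)+\vp(b,b),
\]
and each of the four summands vanishes in $\C$, so $a+b\in N_\vp$.

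No step here is really an obstacle; the only subtle point is recognizing that Lemma \ref{lemma 1}(i) (rather than a full Cauchy–Schwarz inequality in $\C$, which Lemma \ref{lemma 1} does not guarantee in general) is enough, because when $\vp(a,a)=0_\C$ the factor of $2$ is immaterial and we get $\vp(a,b)=0_\C$ outright.
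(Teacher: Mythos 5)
Your proof is correct and follows the same route as the paper: the paper also reduces the lemma to the identity $N_\vp=\{a\in\X:\vp(a,b)=0_\C\ \forall b\in\X\}$, obtained precisely from Lemma \ref{lemma 1}(i), and your write-up simply makes explicit the expansion of $\vp(a+b,a+b)$ that the paper leaves to the reader.
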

\begin{proof}
$ N_{\varphi} = \big\{ a \in \X :
\varphi(a,a) = 0_\C \big\} = \big\{ a \in \X : \varphi(a,b) = 0_\C, \ \forall \
b \in \X \big\}$ is an easy consequence of Lemma \ref{lemma 1}.
\end{proof}
For the sake of simplicity, we denote by $\Lambda_\vp(a)$ the coset containing $a\in \X$; i.e., $\Lambda_\vp(a)=a+N_\vp$.

We define 
a positive  sesquilinear $\C$-valued map on   $\X/N_\vp\times\X/N_\vp$ as follows: \begin{equation*}\ip{\cdot}{\cdot}_\vp:\X/N_\vp\times\,\X/N_\vp\to\C\end{equation*} \begin{equation}\label{eq: inner pr phi semidef}
\ip{\Lambda_\vp(a)}{ \Lambda_\vp(b)}_\vp:=\vp(a,b)
\end{equation}The associated quasi norm is:\begin{equation}
\label{eq: norm phi semidef}
\|\Lambda_\vp(a)\|_\vp:=\sqrt{\|\vp(a,a)\|_\C},\quad a\in\X.\end{equation}

It is easy to check that
\begin{lem} $\Lambda_\vp(\X)$ is a quasi normed space.\end{lem}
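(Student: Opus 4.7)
The plan is to verify the four defining properties of a quasi norm on the quotient space, after first establishing that the sesquilinear map $\ip{\cdot}{\cdot}_\vp$ and hence the would-be norm $\|\cdot\|_\vp$ are well-defined on $\X/N_\vp$.

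\textbf{Well-definedness.} I will first show that formulas \eqref{eq: inner pr phi semidef} and \eqref{eq: norm phi semidef} do not depend on the chosen representatives. The key tool is the characterization of $N_\vp$ used in the proof of Lemma \ref{N phi subspace}, namely $N_\vp=\{n\in\X:\vp(n,b)=0_\C, \ \forall b\in\X\}$, which is an immediate consequence of Lemma \ref{lemma 1}(i): if $\vp(n,n)=0_\C$, then
\[
\|\vp(n,b)\|_\C\leq 2\|\vp(n,n)\|_\C^{1/2}\|\vp(b,b)\|_\C^{1/2}=0.
\]
Hence, if $a'=a+n_a$ and $b'=b+n_b$ with $n_a,n_b\in N_\vp$, sesquilinearity yields $\vp(a',b')=\vp(a,b)$, so $\ip{\Lambda_\vp(a)}{\Lambda_\vp(b)}_\vp$ is unambiguously defined; in particular $\|\Lambda_\vp(a)\|_\vp=\sqrt{\|\vp(a,a)\|_\C}$ depends only on the coset $\Lambda_\vp(a)$.

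\textbf{Quasi-norm axioms.} Next I would check, in order, the three properties listed on p.~\pageref{page: quasi norm}, now for $\Lambda_\vp(a)\in \X/N_\vp$. Positive definiteness is where passing to the quotient matters: clearly $\|\Lambda_\vp(a)\|_\vp\geq 0$, and $\|\Lambda_\vp(a)\|_\vp=0$ forces $\vp(a,a)=0_\C$, i.e., $a\in N_\vp$, which means $\Lambda_\vp(a)=\Lambda_\vp(0)$ is the zero element of the quotient. Homogeneity is immediate from sesquilinearity, since $\|\alpha\Lambda_\vp(a)\|_\vp=\|\Lambda_\vp(\alpha a)\|_\vp=\sqrt{|\alpha|^2\|\vp(a,a)\|_\C}=|\alpha|\,\|\Lambda_\vp(a)\|_\vp$. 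The quasi-triangle inequality \eqref{eqn:Q3} is obtained by repeating verbatim the estimate displayed on p.~\pageref{page: quasi norm}: using Lemma \ref{lemma 1}(i) on the expansion of $\vp(a+b,a+b)$, one gets
\[
\|\Lambda_\vp(a+b)\|_\vp^2\leq \|a\|_\vp^2+4\|a\|_\vp\|b\|_\vp+\|b\|_\vp^2\leq 2(\|\Lambda_\vp(a)\|_\vp+\|\Lambda_\vp(b)\|_\vp)^2.
\]

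The only point that is more than bookkeeping is the well-definedness step, and this is precisely where Lemma \ref{lemma 1}(i) is essential: without a Cauchy–Schwarz-type bound one could not deduce that elements of $N_\vp$ are $\vp$-orthogonal to all of $\X$, and the quotient construction would fail. All the remaining steps are straightforward transcriptions of the arguments already carried out for $\X$ when $\vp$ was assumed to be faithful.
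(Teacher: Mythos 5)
Your proof is correct and is exactly the verification the paper leaves to the reader (the lemma is stated with "It is easy to check that" and no written proof): well-definedness on the quotient via the characterization $N_\vp=\{n:\vp(n,b)=0_\C\ \forall b\}$ from Lemma \ref{lemma 1}, followed by the three quasi-norm axioms, with the quasi-triangle inequality carried over verbatim from the computation on p.~\pageref{page: quasi norm}. No gaps; the only cosmetic slip is announcing "four defining properties" and then checking the three actually listed.
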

Denote by $\widetilde{\X}$ the completion of $(\X/N_\vp,\|\cdot\|_\vp)$. \begin{rem}\label{rem: joint cont}
We can extend $\ip{\cdot}{\cdot}_\vp$ defined in \eqref{eq: inner pr phi semidef} to $\widetilde{\X}\times \widetilde{\X}$ by continuity, taking into account that,   $\ip{\cdot}{\cdot}_\vp$ is jointly continuous by Lemma \ref{lemma 1}.
\end{rem}

%
%
%

	
	\subsection{The case of a  module over $\C$.}
	In this section $\X$ is a right module over $\C$ and $\vp$ will be a  positive sesquilinear $\C$-valued  map on   $\X\times\X$ such that
		\begin{equation}\label{eq: N_vp right  module}\|\vp(ax,ax)\|_\C\leq\|\vp(a,a)\|_\C\, \|x\|^2_\C, \quad a\in\X, x\in \C.\end{equation} 
	
	%
	%


	If $\vp$ is not faithful we have
	\begin{lem}\label{lem: 4 L}Let  $\vp$ be a positive  sequilinear $\C$-valued map on $\X\times\X$ satisfying \eqref{eq: N_vp right module}, then $\X/N_\vp[\|\cdot\|_\vp ]$ is a normed right C*-module over $\C$. \end{lem}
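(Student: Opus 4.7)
The plan is to verify three things in sequence: that $N_\vp$ absorbs right multiplication by $\C$, that the induced right action on $\X/N_\vp$ is well-defined, and that the quasi norm $\|\cdot\|_\vp$ is compatible with this action.

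The first step is almost immediate. We already know from Lemma \ref{N phi subspace} that $N_\vp$ is a vector subspace. For $a \in N_\vp$ and $x \in \C$, hypothesis \eqref{eq: N_vp right  module} gives
$$\|\vp(ax,ax)\|_\C \;\leq\; \|\vp(a,a)\|_\C \,\|x\|_\C^2 \;=\; 0,$$
so $ax \in N_\vp$; hence $N_\vp$ is a right $\C$-submodule. It follows that the prescription $\Lambda_\vp(a) \cdot x := \Lambda_\vp(ax)$ descends to cosets (if $a-a' \in N_\vp$ then $(a-a')x \in N_\vp$, whence $\Lambda_\vp(ax) = \Lambda_\vp(a'x)$), and the right $\C$-module axioms for $\X/N_\vp$ transfer verbatim from those of $\X$.

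Combining this with \eqref{eq: N_vp right  module} once more and the definition in \eqref{eq: norm phi semidef} yields the sought compatibility between the right action and the (quasi) norm:
$$\|\Lambda_\vp(a)\cdot x\|_\vp^{\,2} \;=\; \|\vp(ax,ax)\|_\C \;\leq\; \|\vp(a,a)\|_\C\,\|x\|_\C^2 \;=\; \|\Lambda_\vp(a)\|_\vp^{\,2}\,\|x\|_\C^{\,2}.$$
The remaining (quasi) norm axioms for $\|\cdot\|_\vp$ on the quotient have already been recorded in the lemma that immediately precedes this one.

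The sole point requiring caution — and what I would flag as the conceptual ``main obstacle'' rather than a technical one — is the word \emph{normed} in the statement. Lemma \ref{lemma 1} only supplies the estimate $\|\vp(a,b)\|_\C \leq 2\|\vp(a,a)\|_\C^{1/2}\|\vp(b,b)\|_\C^{1/2}$ in general, which forces the relaxed triangle inequality $\|a+b\|_\vp \leq \sqrt{2}\bigl(\|a\|_\vp+\|b\|_\vp\bigr)$ rather than the classical one. I would therefore read ``normed'' here in the quasi-normed sense of the present section; producing a genuine norm on $\X/N_\vp$ requires additional input, for example commutativity of $\C$ (as in Lemma \ref{lemma 1}(ii)) or some other mechanism restoring the full Cauchy--Schwarz inequality.
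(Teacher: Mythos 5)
Your proof is correct and follows essentially the same route as the paper's: use \eqref{eq: N_vp right module} to show that $N_\vp$ absorbs right multiplication by $\C$ so the action descends to cosets, then derive the compatibility estimate $\|\Lambda_\vp(ax)\|_\vp\leq\|\Lambda_\vp(a)\|_\vp\|x\|_\C$ from the same hypothesis. Your closing caveat about reading ``normed'' in the quasi-normed sense is well taken, since the paper's own proof likewise establishes only the submodule property and this estimate, leaving the (quasi-)triangle inequality to the earlier discussion of $\|\cdot\|_\vp$.
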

	\begin{proof}
		First we observe that, by \eqref{eq: N_vp right module}, if $a\in N_\vp$ and $x\in \C$ then $ax\in N_\vp$. This implies that $\Lambda_\vp(a)x=\Lambda_\vp(ax)$ for every $a\in \X$ and $x\in \C$. 
		Moreover we have \begin{equation*} \|\Lambda_\vp(ax)\|_\vp\leq \|\Lambda_\vp(a)\|_\vp\|x\|_\C, \quad a\in\X, x\in \C.
		\end{equation*}	Indeed, from \eqref{eq: N_vp right module}, we get  \begin{eqnarray*}
			\|\Lambda_\vp(ax)\|_\vp^2&=&\|\vp(ax,ax)\|_\C\leq\|\vp(a,a)\|_\C \|x\|_\C^2 \\ &=&\|\Lambda_\vp(a)\|_\vp^2\|x\|_\C^2,\quad a\in\X, x\in \C.\qedhere
		\end{eqnarray*} 
	\end{proof}
	
	
		
		If property \eqref{eq: N_vp right module} holds, then the completion $\widetilde{\X}$ of $(\X/N_\vp,\|\cdot\|_\vp)$ is also a right  Banach module over $\C$, indeed the right  multiplication by elements in $\C$ can be extended by continuity to $\widetilde{\X}$: \begin{equation*}
			\|ax\|_\vp\leq\|a\|_\vp\|x\|_\C,\quad \forall a\in \widetilde{\X}, x\in\C.
		\end{equation*}


		\begin{defn} \label{def_Clinear}
		Let $\vp$ be a  positive sesquilinear $\C$-valued  map on   $\X\times\X$. The map  $\vp$ is {\em $\C$-linear} if $$\vp(a,bx)=\vp(a,b)x, \; \forall x\in \C; a,b\in \X.$$ \end{defn}
		
		Then $\vp$ satisfies satisfies the Cauchy-Schwarz inequality as shown in \cite[Section 1.2]{MT}.

\begin{rem} 
If $\vp$ is a $\C$-linear positive  sesquilinear $\C$-valued map on   $\X\times\X$ then \eqref{eq: N_vp right module} holds.
Indeed, 	recalling that if $c\in\C^+$ then $t^*ct\leq\|c\|_\C t^*t$, $t\in\C$\begin{eqnarray*}\vp(ax,ax)&=&\vp(ax,a)x=\vp(a,ax)^* x=(\vp(a,a)x)^* x\\&=&x^*\vp(a,a)x\leq\|\vp(a,a)\|_\C\, x^*x,\end{eqnarray*} and recalling that the norm in a C*-algebra preserves the order on positive elements,  we get \eqref{eq: N_vp right module}.
In this case, in fact, $\X/N_\phi$ is a pre-Hilbert $\C$-module (see \cite[Definition 1.2.1]{MT}).
\end{rem}

\begin{rem}
	Let  $\vp$ be a $\C$-linear positive  sequilinear $\C$-valued map on $\X\times\X$, then $$\vp(b,a)\vp(a,b)\leq\|\vp(a,a)\|_{\C} \,\vp(b,b), \quad \forall a,b\in\X.$$ This is another generalization of the Cauchy-Schwarz inequality, see  \cite[Proposition 1.2.4]{MT}.
\end{rem}
%
%
%


It is easy to see that the following Cauchy-Schwarz inequality holds.
\begin{lem}\label{lem: Cauchy-Schwarz 1}  Let  $\vp$ be a $\C$-linear positive  sequilinear $\C$-valued map on $\X\times\X$, then
$\|\vp(a,b)\|_\C\leq\|\Lambda_\vp(a)\|_\vp \, \|\Lambda_\vp(b)\|_\vp$, for every $a,b\in\X$.
\end{lem}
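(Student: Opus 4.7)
The plan is to deduce this norm-Cauchy--Schwarz inequality directly from the stronger operator-valued Cauchy--Schwarz inequality recorded in the preceding remark. Since $\vp$ is $\C$-linear and positive sesquilinear, the remark above gives
\[
\vp(b,a)\vp(a,b)\leq\|\vp(a,a)\|_{\C}\,\vp(b,b), \qquad \forall a,b\in \X,
\]
as an inequality between positive elements of $\C$.

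First I would rewrite the left-hand side using property $(iii)$, namely $\vp(b,a)=\vp(a,b)^*$, so that $\vp(b,a)\vp(a,b)=\vp(a,b)^*\vp(a,b)$. Both sides are then positive elements of $\C$, and I would apply the fact that the C*-norm is monotone on the positive cone to obtain
\[
\|\vp(a,b)^*\vp(a,b)\|_\C \leq \|\vp(a,a)\|_\C\,\|\vp(b,b)\|_\C.
\]

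Next I would invoke the C*-identity $\|c^*c\|_\C=\|c\|_\C^2$ with $c=\vp(a,b)$, turning the left-hand side into $\|\vp(a,b)\|_\C^2$. Taking square roots and recalling the definition $\|\Lambda_\vp(a)\|_\vp=\sqrt{\|\vp(a,a)\|_\C}$ from \eqref{eq: norm phi semidef} then yields the claimed inequality
\[
\|\vp(a,b)\|_\C\leq\|\Lambda_\vp(a)\|_\vp\,\|\Lambda_\vp(b)\|_\vp.
\]

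There is no real obstacle here: the proof is essentially the standard argument deducing the scalar Cauchy--Schwarz inequality from its operator-valued sharpening, with the C*-identity doing all the work. The only point that requires care is the explicit invocation of monotonicity of the norm on $\C^+$ (so that the operator inequality can be passed to norms), and the identification of the left-hand side as $\vp(a,b)^*\vp(a,b)$ via the Hermitian symmetry $(iii)$ of $\vp$, both of which are immediate.
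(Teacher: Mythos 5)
Your proof is correct and follows exactly the route the paper intends: the paper states the lemma without proof ("it is easy to see"), immediately after recording the operator-valued inequality $\vp(b,a)\vp(a,b)\leq\|\vp(a,a)\|_{\C}\,\vp(b,b)$ from \cite[Proposition 1.2.4]{MT}, and your argument (hermitian symmetry, monotonicity of the norm on $\C^+$, and the C*-identity) is the standard way to pass from that to the norm inequality. Nothing to add.
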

%


\begin{rem} If  $\vp$ is faithful and satisfies the Cauchy-Schwarz inequality,  then $\|\cdot\|_\vp$ defined in \eqref{eq: norm phi semidef} is not only a  quasi norm, but  is a norm: $\|a\|_\vp=0$ implies that $a=0$ and the triangular  inequality holds true. In fact:
\begin{eqnarray*}
\|a+b\|_\vp^2&=&\|\vp(a+b,a+b)\|_\C\leq\|\vp(a,a)\|_\C+2\|\vp(a,b)\|_\C+\|\vp(b,b)\|_\C\\&\leq&\|a\|_\vp^2+2\|a\|_\vp \, \|b\|_\vp+\|b\|_\vp^2=(\|a\|_\vp+\|b\|_\vp)^2.
\end{eqnarray*}
\end{rem}


\subsection{The case of a locally convex quasi *-algebra}

In this section we will consider a locally convex quasi *-algebra $(\A,\A_0)$ with unit $\id$ which is, at once,  a $\C$-bimodule.\\

\begin{defn} \label{px}
We 	denote by $\QA$ the set of all  positive sesquilinear $\C$-valued maps on $\A \times \A$ that satisfy a property of invariance:
\begin{itemize}
\item[(I)]
$\vp(ac,d)=\vp(c, a^*d), \quad \forall \ a \in \A, \ c,d \in \Ao.$
\end{itemize}
\end{defn}	
We maintain the same notations as before: then $\Lambda_\vp(a)$ will denote the coset in $\A/N_{\varphi}$, containing $a$.

\begin{rem}\label{rem: equival}
We recall that \begin{equation*}
	\lim_{n \to\infty}\vp(a_n,a_n)= 0_\C \Leftrightarrow \lim_{n \to\infty}\|\Lambda_\vp(a_n)\|_\vp=0.
\end{equation*}
\end{rem}
\begin{prop}\label{prop_nonsing}
\vspace{-1mm} Let  $\vp$ be a positive sequilinear $\C$-valued map on $\A\times\A$.
The following statements are equivalent:
\begin{itemize}
	\item[{\em (i)}]$\Lambda_\vp(\Ao)$ is dense in  $\widetilde{\A}$;
	\item[{\em (ii)}] If $\{a_n\}$ is a sequence of elements of $\A$ such that:
	\begin{itemize}
		\item[{\em (ii.a)}]$\vp(a_n,c){\to} 0_\C$, as $n \to\infty$, for every $c \in \Ao$;
		\item[{\em (ii.b)}]$\vp(a_n-a_m,a_n-a_m){\to} 0_\C$, as $n,m \to\infty$;
	\end{itemize}
	then,  $\displaystyle \lim_{n\to \infty}\vp(a_n,a_n) =0_\C$.
\end{itemize}
\end{prop}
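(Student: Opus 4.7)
Both conditions are really statements about the completion $\widetilde{\A}$, and the plan is to see (ii) as an orthogonality reformulation of (i). Condition (ii.b) says $\{\Lambda_\vp(a_n)\}$ is $\|\cdot\|_\vp$-Cauchy in $\Lambda_\vp(\A)$, so it converges in $\widetilde{\A}$ to some $\xi$. By joint continuity of $\langle\cdot,\cdot\rangle_\vp$ (Remark \ref{rem: joint cont}), condition (ii.a) translates into $\langle\xi,\Lambda_\vp(c)\rangle_\vp=0_\C$ for every $c\in\Ao$, and, via Remark \ref{rem: equival}, the target conclusion $\vp(a_n,a_n)\to 0_\C$ is just $\xi=0$. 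Thus (ii) is equivalent to asserting that the only $\xi\in\widetilde{\A}$ which is $\C$-orthogonal to all of $\Lambda_\vp(\Ao)$ is $\xi=0$.

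For (i) $\Rightarrow$ (ii), I would take such a $\xi$ and, using the density hypothesis, choose $\{c_k\}\subset\Ao$ with $\Lambda_\vp(c_k)\to\xi$. Then
\[
\langle\xi,\xi\rangle_\vp \;=\; \langle\xi,\,\xi-\Lambda_\vp(c_k)\rangle_\vp \,+\, \langle\xi,\Lambda_\vp(c_k)\rangle_\vp \;=\; \langle\xi,\,\xi-\Lambda_\vp(c_k)\rangle_\vp,
\]
and the Cauchy--Schwarz-type estimate of Lemma \ref{lemma 1}(i) gives $\|\langle\xi,\xi\rangle_\vp\|_\C \leq 2\|\xi\|_\vp\|\xi-\Lambda_\vp(c_k)\|_\vp \to 0$ as $k\to\infty$, forcing $\xi=0$.

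For (ii) $\Rightarrow$ (i), I would argue by contraposition. Suppose $\overline{\Lambda_\vp(\Ao)}\neq\widetilde{\A}$. Since $\Lambda_\vp(\A)$ is dense in $\widetilde{\A}$ by construction, there must exist $a\in\A$ with $\Lambda_\vp(a)\notin\overline{\Lambda_\vp(\Ao)}$. The goal is to produce a nonzero $\xi\in\widetilde{\A}$ which is $\C$-orthogonal to every $\Lambda_\vp(c)$ with $c\in\Ao$: any sequence $\{a_n\}\subset\A$ with $\Lambda_\vp(a_n)\to\xi$ then satisfies (ii.b) automatically and, by joint continuity of $\langle\cdot,\cdot\rangle_\vp$, also (ii.a), while $\vp(a_n,a_n)\to\langle\xi,\xi\rangle_\vp\neq 0_\C$ would contradict (ii).

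The main obstacle is exactly the construction of such a $\xi$. In a Hilbert space one would take $\xi$ to be $\Lambda_\vp(a)$ minus its orthogonal projection onto $\overline{\Lambda_\vp(\Ao)}$, but $\widetilde{\A}$ is only a quasi $B_\C$-space, with no projection theorem at hand. The route I would pursue is to scalarize the $\C$-valued inner product: for each state $\omega\in\mathcal{S}(\C)$ the form $\varphi_\omega(\xi,\eta):=\omega(\langle\xi,\eta\rangle_\vp)$ is a positive semidefinite scalar sesquilinear form on $\widetilde{\A}$, whose associated Hilbert space $\H_\omega$ receives a contractive linear map $\iota_\omega\colon\widetilde{\A}\to\H_\omega$. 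Using \eqref{eqn_norm_normal} applied to the normal element $\langle\Lambda_\vp(a),\Lambda_\vp(a)\rangle_\vp\in\C$, one selects $\omega$ for which $\iota_\omega(\Lambda_\vp(a))$ sits at positive $\varphi_\omega$-distance from $\iota_\omega(\Lambda_\vp(\Ao))$; the usual orthogonal projection in $\H_\omega$ then delivers a nonzero vector $\C$-orthogonal to $\iota_\omega(\Lambda_\vp(\Ao))$, and this construction can be pulled back to $\widetilde{\A}$ to produce the required $\xi$ and close the contraposition.
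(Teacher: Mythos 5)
Your direction (i)$\Rightarrow$(ii) is correct and is essentially the paper's argument: condition (ii.b) gives a limit $\xi\in\widetilde{\A}$, (ii.a) gives $\ip{\xi}{\Lambda_\vp(c)}_\vp=0_\C$ for all $c\in\Ao$, and your Cauchy--Schwarz estimate $\|\ip{\xi}{\xi}_\vp\|_\C\le 2\|\xi\|_\vp\|\xi-\Lambda_\vp(c_k)\|_\vp$ is exactly the justification (left implicit in the paper) for why a vector orthogonal to a dense set must vanish. Your reformulation of (ii) as the triviality of the orthogonal complement of $\Lambda_\vp(\Ao)$ in $\widetilde{\A}$ is also the right reading and matches how the paper organizes both implications.

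The gap is in your (ii)$\Rightarrow$(i). The paper proves this implication \emph{directly}: given $\xi\in\widetilde{\A}$ orthogonal to $\Lambda_\vp(\Ao)$, it uses the fact that $\Lambda_\vp(\A)$ (the image of all of $\A$, not just $\Ao$) is by construction dense in the completion $\widetilde{\A}$ to choose $a_n\in\A$ with $\Lambda_\vp(a_n)\to\xi$, verifies that this sequence satisfies (ii.a) and (ii.b), and concludes from hypothesis (ii) that $\xi=0$. You instead contrapose and must \emph{construct} a nonzero orthogonal vector when $\Lambda_\vp(\Ao)$ fails to be dense, and your scalarization scheme does not deliver one. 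First, orthogonality of $\iota_\omega(\xi)$ to $\iota_\omega(\Lambda_\vp(\Ao))$ in a single $\H_\omega$ only yields $\omega(\ip{\xi}{\Lambda_\vp(c)}_\vp)=0$ for that one state, not $\ip{\xi}{\Lambda_\vp(c)}_\vp=0_\C$; you would need all states simultaneously, and the orthogonal projections in the various $\H_\omega$ are unrelated to one another. Second, the projected vector lives in $\H_\omega$, and $\iota_\omega$ is in general neither injective nor surjective, so there is no pull-back to $\widetilde{\A}$. Most seriously, the object you seek need not exist at all: already in the Hilbert C*-module $C[0,1]$ over itself, the closed, non-dense submodule $\{f:f(0)=0\}$ has trivial orthogonal complement, so non-density does not force the existence of a nonzero orthogonal vector in this setting. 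The contrapositive route is therefore not merely incomplete but unworkable as stated; you should argue directly as the paper does, where the hypothesis (ii) is applied to an approximating sequence drawn from $\A$ rather than from $\Ao$. (Your difficulty does expose that the paper's own closing inference, from triviality of the orthogonal complement to density of $\Lambda_\vp(\Ao)$, deserves more justification than it receives; but that is a separate matter from the failure of your construction.)
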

\begin{proof}We proceed along the lines of \cite[Proposition 2.3.2]{FT_book}. \\$(i)\Rightarrow (ii)$ 
Let $\{a_n\}\subset{\A}$ be a sequence as required in $(ii)$. Then, by $(ii.b)$ and Remark \ref{rem: equival}, the sequence $\{\Lambda_\vp(a_n)\}$ is Cauchy in the complete space $\widetilde{\A}$. 
Then there exists $\xi\in\widetilde{\A}$ such that $\lim_{n \to\infty}\|\Lambda_\vp(a_n)-\xi\|_\vp=0$. 
Now, by $(ii.a)$
$$\|\ip{\xi}{\Lambda_\vp(c)}_\vp\|_\C =\lim_{n \to\infty}\|\ip{\Lambda_\vp(a_n)}{\Lambda_\vp(c)}_\vp\|_\C= \lim_{n \to\infty}\|\vp(a_n,c)\|_\C = 0,$$ for all $c\in\A_0$,  hence $\ip{\xi}{\Lambda_\vp(c)}_\vp=0_\C$, $\forall c\in\A_0$ i.e., $\xi$ is orthogonal to  $\Lambda_\vp(\Ao)$  dense subset of $\widetilde{\A}$, thus $\xi=0$.  Finally,
$$\lim_{n \to\infty}\|\vp(a_n,a_n)\|_\C=\|\ip{\xi}{\xi}_\vp\|_\C=\|\xi\|_\vp^2=0$$ and this is equivalent to $\lim_{n \to\infty}\vp(a_n,a_n)=0_\C$.\\
$(ii)\Rightarrow(i)$ 
Let $\xi\in\widetilde{\A}$ be a vector which is orthogonal to  $\Lambda_\vp(\Ao)$ i.e. $$\ip{\xi}{\Lambda_\vp(c)}_\vp=0_\C, \quad \forall c\in\Ao.$$ Suppose that $\{a_n\}\subset{\A}$ is a sequence such that $\Lambda_\vp(a_n) \stackrel{\|\cdot\|_\vp}{\to} \xi$ i.e. \begin{equation}\label{eq: Cauchy seq}
	\|\Lambda_\vp(a_n)-\xi\|_\vp\to 0,\mbox{ as }n\to\infty.	
\end{equation} Then, $\{a_n\}$ fulfills $(ii.a)$, indeed, for every $ c\in\A_0$
$$0=\|\ip{\xi}{\Lambda_\vp(c)}_\vp\|_\C =\lim_{n \to\infty}\|\ip{\Lambda_\vp(a_n)}{\Lambda_\vp(c)}_\vp\|_\C= \lim_{n \to\infty}\|\vp(a_n,c)\|_\C,$$
hence $\vp(a_n,c){\to} 0_\C$, as $n \to\infty$, for every $c \in \Ao$;
$(ii.b)$ follows because $\{\Lambda_\vp(a_n) \}$ is a convergent sequence in a complete space, hence it is Cauchy in $\widetilde{\A}$ i.e., $\lim_{n,m \to\infty}\|\Lambda_\vp(a_n-a_m)_\vp\|_\vp=0$ which is equivalent of saying that $$\lim_{n,m \to\infty}\vp(a_n-a_m,a_n-a_m)= 0_\C.$$ 
Thus, by hypothesis $\{a_n\}$ is such that $\lim_{n \to\infty}\vp(a_n,a_n) = 0_\C$;  hence, $$\lim_{n \to\infty}\|\vp(a_n,a_n)\|_\C =\lim_{n \to\infty}\|\Lambda_\vp(a_n)\|_\vp = 0.$$
Comparing with \eqref{eq: Cauchy seq} we get 
$\|\xi\|_\vp=0_\C$, hence $\xi=0$. It follows that $\lambda_\vp(\Ao)$ is dense in  $\widetilde{\A}$.
\end{proof}
\begin{defn} We denote by $\IA$ the subset of $\QA$ satisfying one of the conditions (i)  or (ii) of Proposition \ref{prop_nonsing}. \end{defn}

\subsection{Examples}
Before going forth, we give  some examples of C*-valued positive sesquilinear maps.  We denote  	by $\B(\H)$ be the C*-algebra of bounded operators on $\H$.

\begin{ex}
Let $\D$ be a dense subspace of a Hilbert space $\H$ and consider the *-algebra $\LDb$. Let $\H_1\subset\D$ be a closed subspace of $\H$ and $P$ be the orthogonal projection onto $\H_1$.
Then $P\B(\H)P$ is a von Neumann algebra  which can be identified with a subspace of  $\LDH$. If $V$ is an operator in $P\B(\H)P$, let $$\vp:\LDb\times\LDb\to P\B(\H)P$$ be given by $$\vp(A,B)=V^*B^*AV.$$
Then $\vp$ satisfies our assumptions, but in general $\vp$ is not $P\B(\H)P$-linear. However, if $V=P$, then $\vp$ is $P\B(\H)P$-linear.\\ Consider the quasi *-algebra $(\LDH,\LDb)$. Let $V_i$ be positive bounded operators on $\H$. Define the sesquilinear form $\vp$ on this quasi *-algebra by 
 $$\vp(A,B)=\sum_{i=1}^n\ip{Ax_i}{Bx_i}V_i, \quad x_1,...,x_n\in\D.$$ Then $\vp\in\IA$.
\end{ex}

\begin{ex}\label{ex:6} Let $\mathfrak{M}$ be a von Neumann algebra and $\rho$ a normal faithful finite trace on $\mathfrak{M}_+$.
	Consider the proper CQ*-algebra $(L^p(\rho),L^\infty(\rho))$ (see \cite{FT_book}). Let $W\in L^\infty(\rho)$ such that $W\geq0$ 
	For every  $t\in[0,\|W\|]$  consider the function $$f_t(s):=\begin{cases}s \mbox{ for }0\leq s \leq t\\
		t \mbox{ for }t\leq s \leq \|W\|.
	\end{cases}$$
	Then $\|f_t\|_\infty\leq\|W\|$ and for each $t_1,t_2$ it is $$\|f_{t_1}-f_{t_2}\|_\infty\leq |t_1-t_2|. $$ Moreover, $f_t\upharpoonright\sigma(W)\in C(\sigma(W)) $. Then:
	$$\|f_t(W)\|_\frac{p}{p-2}\leq \rho(\mathbb{I})\|f_t(W)\|_\infty=\rho(\mathbb{I})\|W\|.$$ Hence, 
	$f_t(W)\in L^\frac{p}{p-2}(\rho)$, for each $t\in[0,\|W\|]$. Consider the right multiplication operator $$R_W: X\in L^p(\rho)\to XW\in L^\frac{p-1}{p}(\rho).$$ Let $\vp:L^p(\rho)\times L^p(\rho)\to C([0,\|W\|])$ be given by $$\vp(X,Y)(t)=\rho(X(R_{f_t(W)}Y)^*).$$For each  $t\in[0,\|W\|]$, $\vp(\cdot,\cdot)(t)$ is a well-defined positive sesquilinear (scalar valued) form on $L^p(\rho)\times L^p(\rho)$ because $f_t(W)\in C^*(W)\subseteq L^\infty(\rho)$, with $C^*(W)$ the C*-algebra generated by $W$. Moreover, $f_t(W)$ is a positive operator. To see that $\vp(X,Y)$ is continuous, just observe that for $t_1,t_2\in[0,\|W\|]$ we have that \begin{eqnarray*}
		|\vp(X,Y)(t_1)-\vp(X,Y)(t_2)|&\leq& \|X\|_p\|Y\|_p\|\|f_{t_1}(W)-f_{t_2}(W)\|_\frac{p}{p-2}\\&\leq&\|X\|_p\|Y\|_p\rho(\mathbb{I})(t_1-t_2).
	\end{eqnarray*}In order to see that $\Lambda_\vp(L^\infty(\rho))$ is dense in $L^p(\rho)/N_\vp$, just observe that for each $t\in[0,\|W\|]$, we have that, for every sequence $\{X_n\}\subseteq L^\infty(\rho)$ and $X\in L^p(\rho)$, with $X_n\to X$:
	\begin{eqnarray*}
		|\vp(X_n-X,X_n-X)(t)|&\leq& \|X_n-X\|_p^2\,\|f_{t}(W)\|_\frac{p}{p-2}\\&\leq&\|X_n-X\|_p^2\,\|W\|\rho(\mathbb{I}).
	\end{eqnarray*}

\end{ex}

 \begin{rem}
		The previous example holds also in the case $p=2$ ($\frac{p}{p-2}=\infty$). In this case $\|f_t(W)\|_\frac{p}{p-2}=\|f_t(W)\|_\infty\leq\|W\|$ and
		\begin{eqnarray*}
			|\vp(X,Y)(t_1)-\vp(X,Y)(t_2)|&\leq& \|X\|_2\|Y\|_2(t_1-t_2),\quad t_1,t_2\in[0,\|W\|]\end{eqnarray*}
		\begin{eqnarray*}
			|\vp(X_n-X,X_n-X)(t)|&\leq&\|X_n-X\|_2^2\,\|W\|,
		\end{eqnarray*}for every sequence $\{X_n\}\subseteq L^\infty(\rho)$ and $X\in L^2(\rho)$, with $X_n\to X$.
\end{rem}

\begin{ex}\label{ex 8}
	Let $\mathfrak{M}$ be a von Neumann algebra and $\rho$ a normal faithful finite trace on $\mathfrak{M}_+$. 
	Let $W$ be as in the previous example. 
	Consider the weakly *-measurable operator valued function from $[0,\|W\|]$ into $\BH$. We shall also consider the space $L^2([0,\|W\|],\BH)$ with respect to the Gel'fand-Pettis integral (see \cite{Jocic}). Consider $A_t\in L^2([0,\|W\|],\BH)$ such that $A_t\geq0$ for a.e. $t\in [0,\|W\|]$ with the right multiplication operator and the function $f_t$ as in Example \ref{ex:6}. Define $$\vp:L^p(\rho)\times L^p(\rho)\to \BH$$ to be $$\vp(X,Y)=\int_0^{\|W\|}\rho(X(R_{f_t(W)}Y)^*) A_tdt.$$
	Then $\vp(X,Y)$ is well-defined since $$\ip{\rho(X(R_{f_t(W)}Y)^*) A_t h_1}{h_2}=\rho(X(R_{f_t(W)}Y)^*)\ip{A_t h_1}{h_2}$$is a measurable function of $t$ for every fixed $h_1,h_2\in\H$, hence the function $\rho(X(R_{f_t(W)}Y)^*) A_t$ is weakly *-measurable. Moreover, put $\vertiii{A_t}_2=\left\|\int_0^{\|W\|}A_t^*A_t dt\right\|^{1/2}=\left(\sup_{h\in\H, \|h\|\leq 1}\int_0^{\|W\|}\|A_t(h)\|^2 dt\right)^{1/2}$, since for every $h\in \H$, with $\|h\|\leq 1$ \begin{multline*}\int_0^{\|W\|}\hspace{-2em}|\rho(X(R_{f_t(W)}Y)^*)|^2\| A_t h\|^2dt\\\leq\sup_{t\in[0,\|W\|]}|\rho(X(R_{f_t(W)}Y)^*)|^2\int_0^{\|W\|}\| A_t h\|^2dt\\
		\leq \sup_{t\in[0,\|W\|]}|\rho(X(R_{f_t(W)}Y)^*)|^2\vertiii{A_t}_2^2<+\infty,\end{multline*} it follows that $\rho(X(R_{f_t(W)}Y)^*)A_t\in L^2([0,\|W\|],\BH)$. It is straightforward to check that $\vp(X,X)\geq0$ for all $X\in L^p(\rho)$ by using our choice of $A_t\geq0$; moreover it is  $$\vp(CX,Y)=\vp(X,C^*Y), \quad\forall X,Y\in L^\infty(\rho),\, C\in L^p(\rho).$$If now we take a sequence $\{X_n\}\subseteq L^\infty(\rho)$ and $X\in L^p(\rho)$, by the above argument we deduce that, for every $h\in\H$, with $\|h\|\leq1$, \begin{multline*}\int_0^{\|W\|}\hspace{-2em}(\rho((X_n-X)(R_{f_t(W)}(X_n-X))^*))^2\| A_t h\|^2dt\\\leq\sup_{t\in[0,\|W\|]}\rho((X_n-X)(R_{f_t(W)}(X_n-X))^*)^2\vertiii{A_t}_2^2\\\leq\sup_{t\in[0,\|W\|]}\|X_n-X\|_p^4\|f_t(W)\|^2_\frac{p}{p-2}\vertiii{A_t}_2^2\\\leq\|X_n-X\|_p^4\|W\|^2\rho(\mathbb{I})\vertiii{A_t}_2^2,\quad \forall n\in\mathbb{N}.\end{multline*} This shows that $\Lambda_\vp(L^\infty(\rho))$ is dense in $L^p(\rho)/N_\vp$.
\end{ex}

\begin{ex}
Consider $(L^p(\rho),L^\infty(\rho))$ and let$\{\Phi_n\}_n$ be a family of  invariant positive $\C$-valued sesquilinear maps   with $\Lambda_{\vp_n}(\Ao)$ dense in $\A/N_{\vp_n}$ and such that there exists $M>0$ for which $$\|\Phi_n(X,Y)\|_\C\leq M \|X\|_p\|Y\|_p$$ for all $X,Y\in L^p(\rho)$. Define now $$\Phi:L^p(\rho)\times L^p(\rho)\to \C$$ by $$\Phi(X,Y)=\sum_{n=1}^\infty x_n\Phi_n(X,Y)x_n^*,$$ for all $X,Y\in L^p(\rho)$ and $ \{x_n\}\subseteq\C $ such that $\sum_{n=1}^\infty \|x_n\|^2<\infty$. Then $$\|\Phi(X,Y)\|_\C\leq M \|X\|_p\|Y\|_p\sum_{n=1}^\infty \|x_n\|^2,\quad X,Y\in L^p(\rho). $$ It is easy to verify that $\Phi$ is an invariant positive $\C$-valued sesquilinear map with $\Lambda_{\vp}(\Ao)$ dense in $\A/N_{\vp}$.
\end{ex}

\section{Construction of *-representations}\label{sect_GNS_rep}

An important tool for the study of the structure of a locally convex quasi *-algebra $(\A,\A_0)$ is the the Gelfand--Naimark--Segal (GNS) construction for an invariant positive sesquilinear (ips) form on $\A\times \A$. 
The aim of this section is to extend this construction
starting from a positive sesquilinear $\C$-valued maps on $\A\times \A$ when $(\A,\A_0)$ is a locally convex quasi *-algebra with unit $\id$.


\begin{defn} \label{defn_starrepmod} 
Let $(\A,\A_0)$ be a quasi *-algebra with unit $\id$.   Let $\DD_\ppi$ be a dense subspace
of a certain quasi $B_\C$-space $\X$ with
$\C$-valued inner product $\ip{\cdot}{\cdot}_\X$.   A linear map $\ppi$ from $\A$ into $\LDXr$ is called  a \emph{*--representation} of  $(\A, \Ao)$,
if the following properties are fulfilled:
\begin{itemize}
\item[(i)]  $\ppi(a^*)=\ppi(a)^\dagger:=\ppi(a)^*\upharpoonright\DD_\ppi, \quad \forall \ a\in \A$;
\item[(ii)] for $a\in \A$ and $c\in \Ao$, $\ppi(a)\mult\ppi(c)$ is well--defined and {$\ppi(a)\mult \ppi(c)=\ppi(ac)$}.
\end{itemize}
We assume that for every *-representation $\ppi$ of $(\A,\A_0)$,  $\ppi(\id)={\idop_{\D_\ppi}}$, the
identity operator on  the space $\DD_\ppi$.\\
 The *-representation $\ppi$  is said to be\begin{itemize}
\item {\em closable} if there exists $\widetilde{\ppi}$ closure of $\ppi$ defined as $\widetilde{\ppi}(a)=\overline{\ppi(a)}\upharpoonright{\widetilde{\D}_\ppi}$ where $\widetilde{\D}_\ppi$ is the completion under the graph topology $t_\ppi$ defined by the seminorms $\xi\in\D_\ppi\to\|\xi\|+\|\ppi(a)\xi\|$, $a\in\A$, where $\|\cdot\|$ is the norm induced {by the inner product on $\D_\ppi$}
\item \emph{closed} if  $\DD_\ppi[t_\ppi]$ is complete. 
\item  \emph{cyclic} if there exits $\xi\in\D_\ppi$ such that $\ppi(\A_o)\xi$ is dense in $\X$ in its quasi norm topology.
\end{itemize}

\end{defn}

Let $(\A,\A_0)$  be a locally convex quasi *-algebra with unit $\id$.
\begin{thm}\label{thm_rep}
\vspace{-1mm} Let  $\vp\in\QA$.
The following statements are equivalent:
\begin{itemize}
\item[{\em (i)}]$\vp\in\IA$;
\item[{\em (ii)}] there exists a quasi $B_\C$-space $\X_\vp$ with $\C$-valued inner product  $\ip{\cdot}{\cdot}_{\X_\vp}$,
a dense subspace $\DD_\vp\subseteq\X_\vp$ and a closed cyclic *-representation $\ppi_\vp:\A\to{\mathcal L}^\dag(\DD_\vp,\X_\vp)$ such that $$\ip{\ppi_\vp(a)x}{y}_{\X_\vp}=\ip{x}{\ppi_\vp(a^*)y}_{\X_\vp}, \quad \forall x,y\in\DD_\vp, a\in\A$$ and such that $$\vp(a,b)=\ip{\ppi_\vp(a)\xi_\vp}{\ppi_\vp(b)\xi_\vp}_{\X_\vp}, \quad \forall  a,b\in\A.$$ 
\end{itemize}
\end{thm}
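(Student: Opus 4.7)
The plan is to adapt the classical GNS construction for invariant positive sesquilinear forms on a quasi *-algebra (\cite{FT_book}) so that the output lives in a quasi $B_\C$-space rather than a Hilbert space. The direction (ii)$\Rightarrow$(i) is largely formal: if $(\ppi_\vp,\xi_\vp,\X_\vp)$ is the data in (ii), then $\vp(a,b):=\ip{\ppi_\vp(a)\xi_\vp}{\ppi_\vp(b)\xi_\vp}_{\X_\vp}$ is positive and sesquilinear, and invariance (I) follows from combining adjointability with the partial product $\ppi_\vp(a)\mult\ppi_\vp(c)=\ppi_\vp(ac)$. Cyclicity says $\ppi_\vp(\Ao)\xi_\vp$ is dense in $\X_\vp$; the assignment $a\mapsto\ppi_\vp(a)\xi_\vp$ descends to an isometric embedding $\A/N_\vp\hookrightarrow\X_\vp$ which identifies $\Lambda_\vp(\Ao)$ with $\ppi_\vp(\Ao)\xi_\vp$ and $\widetilde{\A}$ with the closure of $\ppi_\vp(\A)\xi_\vp$, so that density of $\Lambda_\vp(\Ao)$ in $\widetilde{\A}$, i.e.\ condition (i) of Proposition \ref{prop_nonsing}, is immediate.

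For the substantive direction (i)$\Rightarrow$(ii), I would set $\X_\vp:=\widetilde{\A}$, extend $\vp$ by joint continuity to a $\C$-valued inner product on $\X_\vp$ (Remark \ref{rem: joint cont}), take $\D_\vp:=\Lambda_\vp(\Ao)$ (dense by the standing hypothesis), $\xi_\vp:=\Lambda_\vp(\id)$, and on $\D_\vp$ define
\[
\ppi_\vp^0(a)\Lambda_\vp(c):=\Lambda_\vp(ac),\qquad a\in\A,\ c\in\Ao.
\]
Granting that this is well-defined modulo $N_\vp$, invariance (I) immediately gives
\[
\ip{\ppi_\vp^0(a)\Lambda_\vp(c)}{\Lambda_\vp(d)}_{\X_\vp}=\vp(ac,d)=\vp(c,a^*d)=\ip{\Lambda_\vp(c)}{\ppi_\vp^0(a^*)\Lambda_\vp(d)}_{\X_\vp},
\]
so $\ppi_\vp^0(a)\in\LDXr$ with $\ppi_\vp^0(a)^\dag=\ppi_\vp^0(a^*)$; bimodule associativity of $(\A,\Ao)$ yields the rule $\ppi_\vp^0(a)\mult\ppi_\vp^0(c)=\ppi_\vp^0(ac)$; and $\ppi_\vp^0(c)\xi_\vp=\Lambda_\vp(c\id)=\Lambda_\vp(c)$ simultaneously produces cyclicity and reconstructs $\vp(a,b)=\ip{\ppi_\vp^0(a)\xi_\vp}{\ppi_\vp^0(b)\xi_\vp}_{\X_\vp}$. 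Closedness is then obtained by passing to the closure $\ppi_\vp:=\widetilde{\ppi_\vp^0}$ under the graph topology $t_{\ppi_\vp^0}$, without disturbing cyclicity or the reconstruction formula, since $\D_\vp$ is a core for every $\ppi_\vp(a)$ by construction.

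The main obstacle is the well-definedness step, i.e.\ the implication $c\in N_\vp\cap\Ao\Rightarrow ac\in N_\vp$ for every $a\in\A$. Outside of the $\C$-linear (Paschke) regime, the $\C$-valued Cauchy--Schwarz inequality of Lemma \ref{lemma 1} does not bound $\vp(ac,ac)$ by $\vp(c,c)$, so one must use hypothesis (i) as a test instead: Lemma \ref{lemma 1} gives $N_\vp=\{x\in\A:\vp(x,y)=0_\C\,\forall y\in\A\}$, hence by (I) one has $\vp(ac,d)=\vp(c,a^*d)=0_\C$ for every $d\in\Ao$; picking a sequence $\Lambda_\vp(d_n)\to\Lambda_\vp(ac)$ in $\X_\vp$ from $\Lambda_\vp(\Ao)$ (available thanks to (i)) and passing to the limit using joint continuity of the extended inner product yields $\vp(ac,ac)=0_\C$. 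This is precisely where the density hypothesis $\vp\in\IA$ is irreplaceable, and the only step of the construction without a counterpart in the Hilbert $\C$-module setting of Paschke.
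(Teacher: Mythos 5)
Your proposal is correct and follows essentially the same route as the paper's proof: the same quotient-and-complete construction of $\X_\vp=\widetilde{\A}$, the same left action $\Lambda_\vp(c)\mapsto\Lambda_\vp(ac)$ on $\Lambda_\vp(\Ao)$ followed by closure in the graph topology, and, crucially, the same resolution of the well-definedness issue by combining invariance ($\vp(ac,d)=\vp(c,a^*d)=0_\C$ for $d\in\Ao$) with the density hypothesis $\vp\in\IA$ rather than Cauchy--Schwarz alone. The converse direction is likewise handled identically, reading off the density of $\Lambda_\vp(\Ao)$ in $\widetilde{\A}$ from cyclicity via the isometry $\Lambda_\vp(a)\mapsto\ppi_\vp(a)\xi_\vp$.
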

\begin{proof}The proof proceeds along the lines of that one of \cite[Proposition 2.4.1]{FT_book}.\\$i)\Rightarrow ii)$ Let $\vp\in\IA$. 
The completion $\widetilde{\A}$ of $\Lambda_\vp(\A)$ is, as we have seen,  a quasi $B_\C$-space with quasi norm $\|\cdot\|_\vp$ induced by the quasi inner product $\ip{\cdot}{\cdot}_\vp$: $\|a\|_\vp=\|\ip{a}{a}_\vp\|_\C^{1/2}, \; a\in\widetilde{\A}.$  For any $a\in\A$ and $c\in\Ao$ put
$$\ppi^\circ_\vp(a)(\Lambda_\vp(c)):=\Lambda_\vp (ac).$$ 
Let  $c\in N_\vp$ and  $\{c_n\}\subset\Ao$ such that $$\vp(c_n-ac,c_n-ac)\to0,\quad\mbox{ as } n\to\infty.$$ By the invariance of $\vp$ 
$$\|\vp(ac,d)\|_\C^2=\|\vp(c, a^*d)\|_\C^2\leq2\|\vp(c,c)\|_\C\|\vp(a^*d,a^*d)\|_\C=0, \quad \forall d \in \Ao,$$ and by \eqref{eqn:Q3} we get \begin{multline*}
\|\vp(ac,ac)\|_\C\leq2(\|\vp(ac,c_n)\|_\C+\|\vp(ac,ac-c_n)\|_\C)\\\leq 4\|\vp(ac,ac)\|_\C^{1/2}\|\vp(ac-c_n,ac-c_n)\|_\C^{1/2}\to0, \mbox{ as }n\to\infty.
\end{multline*}Hence we get $\vp(ac,ac)=0_\C$. Thus $ac\in N_\vp$ and for every $a\in\A$ the operator $\ppi_\vp^\circ(a)$ is well-defined from $\lambda_\vp(\Ao)$ into $\widetilde{\A}$. Further, 
for every $a\in\A, c,d\in\Ao$ \begin{eqnarray*}\ip{\ppi^\circ_\vp(a)(\Lambda_\vp(c)) }{\Lambda_\vp (d)}_\vp&=&\vp(ac,d)=\vp(c,a^*d)\\&=&\ip{\Lambda_\vp(c)}{\Lambda_\vp(a^*d)}_\vp\\&=&\ip{\Lambda_\vp(c)}{\ppi^\circ_\vp(a^*)(\Lambda_\vp(d))}_\vp, \end{eqnarray*} hence $\ppi^\circ_\vp(a^*)=\ppi^\circ_\vp(a)^\dagger$
and for every $a\in\A, c,d,f\in\Ao$.
\begin{eqnarray*}\ip{\ppi^\circ_\vp(ac)(\cls{f})}{\cls{d}_\vp}&=&\vp(acf,d)=\vp(cf,a^*d)\\&=&\ip{\ppi^\circ_\vp(c)(\cls{f}}{\ppi^\circ_\vp(a^*)({\cls{d})}}_\vp
\end{eqnarray*} 
By the definition given in Remark \ref{rem_partialmult}, we conclude that $\ppi_\vp^\circ(a)\mult\ppi_\vp^\circ(c)$ is well-defined and therefore
$$\ppi_\vp^\circ(ac)=\ppi_\vp^\circ(a)\mult\ppi_\vp^\circ(c),\quad\forall a\in\A, c\in\Ao.$$ Hence $\ppi_\vp^\circ$ is a *-representation and $\ppi_\vp^\circ\upharpoonright\Lambda_\vp(\Ao)$ maps $\Lambda_\vp(\Ao)$ into itself.  

The operator $\ppi^\circ_\vp(a)$ is closable:  $\ppi^\circ_\vp(a^*)$ is adjointable, $\ppi^\circ_\vp(a^*)^*$ is a closed extension of $\ppi^\circ_\vp(a)$ (see Remark \ref{rem_closable}). 	
Denote by $\overline{\ppi^\circ_\vp(a)}$ its closure and let $\DD_\vp$ denote the completion of $\cls{\Ao}$ in the graph topology $t_\ppi$ 
and for each $a\in\A$ let $\ppi_\vp(a)=\overline{\ppi^\circ_\vp(a)}\upharpoonright{\D_\vp}$. 
Then $\ppi_{\vp}$ is a closed *-representation of $(\A,\Ao)$. 
Finally, since $(\A,\Ao)$ has a unit $\id$, it follows that $\Lambda_\vp(\id)=\id+N_\vp$ is a cyclic vector and $\ppi_\vp(\id)=\mathbb{I}_{\D_\vp}$.\\
$ii)\Rightarrow i)$ 
To prove that the sesquilinear  $\C$-valued form $$\vp(a,b)=\ip{\ppi_\vp(a)\xi_\vp}{\ppi_\vp(b)\xi_\vp}_{\X_\vp}, \quad \forall  a,b\in\A$$ where $\xi_\vp$ is a cyclic vector for $\ppi_\vp$ is in $\IA$ it suffices to prove that it is positive, invariant and $\lambda_\vp(\Ao)$ is dense in $\widetilde{\A}$.
By definition it is positive. Since $\ppi_\vp$ is a *-representation we get that \begin{eqnarray*}
\vp(ac,d)&=&\ip{\ppi_\vp(a)\ppi_\vp(c)\xi_\vp}{\ppi_\vp(d)\xi_\vp}_\vp\\&=&\ip{\ppi_\vp(c)\xi_\vp}{\ppi_\vp(a^*)\ppi_\vp(d)\xi_\vp}_\vp=\vp(c,a^*d).
\end{eqnarray*} By hypothesis $\ppi_\vp(\A_0)\xi_\vp$ is dense in $\X_\vp=\widetilde{\A}$, hence for every $a\in\A$ there exists $\{c_n\}\subset\Ao$ such that $$\|\ppi_\vp(a)\xi_\vp-\ppi_\vp(c_n)\xi_\vp\|_\vp\to 0, \quad n\to\infty.$$Hence, $$\|\Lambda_\vp(a)-\Lambda_\vp(c_n)\|_\vp^2=\vp(a-c_n,a-c_n)=\|\ppi_\vp(a)\xi_\vp-\ppi_\vp(c_n)\xi_\vp\|_\vp^2\to 0$$ as $n\to\infty$. This implies that $\Lambda_\vp(\Ao)$ is dense in $\widetilde{\A}$ and concludes the proof.
\end{proof}
 \begin{defn}
		The triple $(\ppi_\vp, \Lambda_\vp,\X_\vp)$ constructed in Theorem \ref{thm_rep} is called
		the GNS construction for $\vp$ and $\ppi_\vp$ is called the GNS representation of $\A$
		corresponding to $\vp$.
	\end{defn}
	\begin{prop}\label{prop: uniq repre}
		Let $(\A,\Ao)$ be a quasi *-algebra with unit $\id$ and $\vp \in \IA$.
		Then, the GNS construction $(\ppi_\vp, \Lambda_\vp,\X_\vp)$ is unique up to unitary equivalence.
	\end{prop}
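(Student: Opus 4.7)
The plan is to mimic the classical uniqueness argument for the GNS construction, adapted to the quasi $B_\C$-space setting. Suppose $(\ppi_\vp,\Lambda_\vp,\X_\vp)$ and $(\ppi'_\vp,\Lambda'_\vp,\X'_\vp)$ are two GNS triples associated to the same $\vp\in\IA$, with respective cyclic vectors $\xi_\vp=\Lambda_\vp(\id)$ and $\xi'_\vp=\Lambda'_\vp(\id)$. First I would define, on the dense subspace $\Lambda_\vp(\Ao)\subset \X_\vp$, the linear map
\[
U_0:\Lambda_\vp(c)\mapsto \Lambda'_\vp(c), \quad c\in\Ao.
\]
Well-definedness and isometry follow at once from the identity
\[
\ip{\Lambda_\vp(c)}{\Lambda_\vp(d)}_{\X_\vp}=\vp(c,d)=\ip{\Lambda'_\vp(c)}{\Lambda'_\vp(d)}_{\X'_\vp},\quad c,d\in\Ao,
\]
which in particular implies $\|\Lambda_\vp(c)\|_\vp=\|\Lambda'_\vp(c)\|_\vp$, so $U_0$ is well-defined on cosets and preserves the quasi norm.

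Next I would extend $U_0$ by continuity to an isometric $\C$-inner-product preserving surjection $U:\X_\vp\to\X'_\vp$. Here density of $\Lambda_\vp(\Ao)$ in $\X_\vp$ (and of $\Lambda'_\vp(\Ao)$ in $\X'_\vp$), guaranteed by $\vp\in\IA$ via Proposition \ref{prop_nonsing}, is essential for both existence and surjectivity. Joint continuity of the quasi inner product (Remark \ref{rem: joint cont}) ensures that the identity $\ip{Ux}{Uy}_{\X'_\vp}=\ip{x}{y}_{\X_\vp}$ passes to the completions.

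Then I would show the intertwining relation $U\ppi_\vp(a)=\ppi'_\vp(a)U$ for every $a\in\A$. On the dense core $\Lambda_\vp(\Ao)$ this is immediate from the construction of $\ppi^\circ_\vp$ and $\ppi'{}^\circ_\vp$ in the proof of Theorem \ref{thm_rep}:
\[
U\,\ppi_\vp(a)\Lambda_\vp(c)=U\Lambda_\vp(ac)=\Lambda'_\vp(ac)=\ppi'_\vp(a)\Lambda'_\vp(c)=\ppi'_\vp(a)U\Lambda_\vp(c).
\]
To transfer this to the full domains $\DD_\vp,\DD'_\vp$, I would use that these are defined as completions of $\Lambda_\vp(\Ao)$, $\Lambda'_\vp(\Ao)$ in the graph topologies $t_{\ppi_\vp}$, $t_{\ppi'_\vp}$, and observe that $U$ takes the seminorms $\xi\mapsto\|\xi\|_\vp+\|\ppi_\vp(a)\xi\|_\vp$ on $\Lambda_\vp(\Ao)$ bijectively to those defining $t_{\ppi'_\vp}$ on $\Lambda'_\vp(\Ao)$, giving a homeomorphism $U\restriction\Lambda_\vp(\Ao)$ that extends uniquely to a bijection $\DD_\vp\to\DD'_\vp$. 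Passing to limits in the intertwining relation, using closedness of $\ppi_\vp(a)$ and $\ppi'_\vp(a)$, completes the proof.

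The main technical obstacle, in my view, is precisely this last step, namely the transfer of the intertwining identity from the cores to the graph-topology completions, because the representations take values in unbounded operators on a quasi $B_\C$-space rather than a Hilbert space, so one must carefully verify that $U$ indeed respects the family of graph seminorms for every $a\in\A$ and not merely for $a\in\Ao$. Once that is established, the uniqueness of closure gives $U\DD_\vp=\DD'_\vp$ and $\ppi'_\vp(a)=U\ppi_\vp(a)U^{-1}$ for all $a\in\A$.
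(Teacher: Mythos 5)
Your argument is correct and is the standard GNS uniqueness proof, correctly adapted to the quasi $B_\C$-space setting; note that the paper states this proposition without supplying a proof, so there is nothing to compare against except the classical template (e.g.\ the Hilbert-space case in \cite{FT_book}), which you follow faithfully. The two points that genuinely need care in this setting --- that preservation of the quasi norm alone would not suffice to call $U$ unitary, so one must carry the $\C$-valued inner product identity $\ip{Ux}{Uy}_{\X'_\vp}=\ip{x}{y}_{\X_\vp}$ through the completion via joint continuity (Remark \ref{rem: joint cont}), and that the intertwining relation must be transported from the core $\Lambda_\vp(\Ao)$ to the graph-topology completions $\DD_\vp$, $\DD'_\vp$ --- are both identified and handled in your proposal, the latter by observing that $U$ matches the graph seminorms for every $a\in\A$ since $\|\ppi_\vp(a)\Lambda_\vp(c)\|_\vp=\|\vp(ac,ac)\|_\C^{1/2}=\|\ppi'_\vp(a)\Lambda'_\vp(c)\|_\vp$.
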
 

If we consider normed quasi *-algebra $(\A[\|\cdot\|],\Ao)$, then the underlying *-algebra $\Ao$ is dense (in this norm) in $\A$, hence we get automatically the density of $\Lambda_\vp( \A_0)$ in  $\widetilde{\A}$ when $\vp$ is bounded.

\begin{cor}\label{3.1.13 book}Let $(\A[\|\cdot\|],\Ao)$ be a normed quasi *-algebra and $\vp\in\QA$ be such that $\vp$ is  bounded with respect to $\|\cdot\|$. Then, $\vp\in\IA$.\end{cor}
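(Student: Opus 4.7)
The plan is to verify condition (i) of Proposition \ref{prop_nonsing}, namely that $\Lambda_\vp(\Ao)$ is dense in $\widetilde{\A}$. Since $\Lambda_\vp(\A)$ is dense in $\widetilde{\A}$ by construction of the completion, it suffices to show that every $\Lambda_\vp(a)$ with $a\in\A$ is $\|\cdot\|_\vp$-approximable by elements of $\Lambda_\vp(\Ao)$, and then invoke transitivity of density.

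First I would unpack the boundedness hypothesis: by assumption there exists a constant $C>0$ such that $\|\vp(a,b)\|_\C \leq C\|a\|\,\|b\|$ for all $a,b\in\A$. In particular, for every $a\in\A$,
\[
\|\Lambda_\vp(a)\|_\vp^2 \;=\; \|\vp(a,a)\|_\C \;\leq\; C\|a\|^2,
\]
so the quotient map $\Lambda_\vp : (\A,\|\cdot\|) \to (\A/N_\vp,\|\cdot\|_\vp)$ is continuous with $\|\Lambda_\vp(a)\|_\vp \leq \sqrt{C}\,\|a\|$.

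Next, I would exploit the defining property (lc3) specialised to the normed setting: $\Ao$ is $\|\cdot\|$-dense in $\A$. Given $a\in\A$, pick $\{c_n\}\subset\Ao$ with $\|a-c_n\|\to 0$. The continuity estimate above then yields
\[
\|\Lambda_\vp(a)-\Lambda_\vp(c_n)\|_\vp \;\leq\; \sqrt{C}\,\|a-c_n\|\;\longrightarrow\;0,
\]
so $\Lambda_\vp(a)$ lies in the $\|\cdot\|_\vp$-closure of $\Lambda_\vp(\Ao)$. Since $a\in\A$ was arbitrary, $\Lambda_\vp(\Ao)$ is dense in $\Lambda_\vp(\A)$, and therefore dense in $\widetilde{\A}$. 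By Proposition \ref{prop_nonsing}, $\vp\in\IA$.

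There is no real obstacle here: the whole argument is a one-line continuity estimate combined with the density axiom (lc3). The only point worth flagging is that one must read the boundedness of $\vp$ as a norm estimate $\|\vp(a,b)\|_\C \lesssim \|a\|\,\|b\|$ (the natural meaning for a sesquilinear map into a normed space), which is exactly what converts $\|\cdot\|$-approximation of $a$ by elements of $\Ao$ into $\|\cdot\|_\vp$-approximation of $\Lambda_\vp(a)$ by elements of $\Lambda_\vp(\Ao)$.
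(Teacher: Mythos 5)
Your argument is correct and is essentially the paper's own proof: both verify condition (i) of Proposition \ref{prop_nonsing} by taking a sequence $\{c_n\}\subset\Ao$ with $c_n\to a$ in $\A$ (using the density axiom of a normed quasi *-algebra) and estimating $\|\Lambda_\vp(a)-\Lambda_\vp(c_n)\|_\vp^2=\|\vp(a-c_n,a-c_n)\|_\C\le \|\vp\|^2\|a-c_n\|^2\to 0$. Your explicit remark that ``bounded'' must mean $\|\vp(a,b)\|_\C\le C\|a\|\,\|b\|$ matches the paper's implicit use of $\|\vp\|$.
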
\begin{proof}
If $\vp \in\QA$ is bounded, then the subspace $\Lambda_\vp(\Ao)$ is dense in  $\widetilde{\A}$. Indeed, if $a \in \A$,
there exists a sequence $\{c_n\}\subset\Ao$, such that $c_n\to a$	in $\A$ as $n\to\infty$. Then, we have
\begin{multline*}\|\Lambda_\vp(a)- \Lambda_\vp(c_n)\|_\vp^2 = \|\vp(a - c_n, a - c_n)\|_\C\\\leq\|\vp\|^2\|a- c_n\|^2\to 0, \mbox { as }	n\to\infty.\qedhere\end{multline*}
\end{proof}

\begin{cor}\label{cor: right Banach module}Let  $\vp\in\QA$ and  $\A$ be also a right module over $\C$ and let $\vp$ satisfy \eqref{eq: N_vp right module} then,  the quasi $B_\C$-space $\X_\vp$ in Theorem \ref{thm_rep} is also a  Banach right module over $\C$ and $\ppi_\vp(a)$ is  a $\C$-linear operator for all $a\in\A$.  	
\end{cor}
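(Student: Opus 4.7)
The plan is to handle the two assertions in turn.

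For the first---that $\X_\vp$ is a right Banach $\C$-module---the work is essentially already done by Lemma~\ref{lem: 4 L} applied with $\X = \A$: condition \eqref{eq: N_vp right module} makes $N_\vp$ stable under right $\C$-multiplication, so $\A/N_\vp$ carries a well-defined right $\C$-action satisfying $\|\Lambda_\vp(a)\cdot x\|_\vp \le \|\Lambda_\vp(a)\|_\vp\,\|x\|_\C$, and the discussion immediately following that lemma extends this action by continuity to the completion $\widetilde{\A} = \X_\vp$, yielding a right Banach $\C$-module.

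For the $\C$-linearity of $\ppi_\vp(a)$, I would first verify it on the core $\Lambda_\vp(\Ao)$. For $a \in \A$, $c \in \Ao$, $x \in \C$, combining $\Lambda_\vp(c)\cdot x = \Lambda_\vp(cx)$ (from Lemma~\ref{lem: 4 L}) with the compatibility $a(cx) = (ac)x$ between the left $\Ao$-action and the right $\C$-action on $\A$, one obtains
\[
\ppi_\vp^\circ(a)\bigl(\Lambda_\vp(c)\cdot x\bigr) = \Lambda_\vp(a(cx)) = \Lambda_\vp((ac)x) = \Lambda_\vp(ac)\cdot x = \bigl(\ppi_\vp^\circ(a)\Lambda_\vp(c)\bigr)\cdot x,
\]
which simultaneously shows that $\Lambda_\vp(\Ao)$ (viewed inside $\widetilde\A$) is stable under the right $\C$-action and that $\ppi_\vp^\circ(a)$ is $\C$-linear on it.

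To extend from the core to $\D_\vp$ I would use closedness of $\ppi_\vp(a)$: given $\xi \in \D_\vp$, pick $\Lambda_\vp(c_n) \to \xi$ in the graph topology $t_\ppi$. Since right multiplication by $x\in\C$ is bounded by $\|x\|_\C$ in the $\|\cdot\|_\vp$-norm on $\X_\vp$, one has both $\Lambda_\vp(c_n)\cdot x \to \xi\cdot x$ and $(\ppi_\vp(a)\Lambda_\vp(c_n))\cdot x \to (\ppi_\vp(a)\xi)\cdot x$ in quasi-norm. Coupled with the core identity $\ppi_\vp(a)(\Lambda_\vp(c_n)\cdot x) = (\ppi_\vp(a)\Lambda_\vp(c_n))\cdot x$, this places $\xi\cdot x$ in the domain of the closed operator $\ppi_\vp(a)$ and delivers $\ppi_\vp(a)(\xi\cdot x) = (\ppi_\vp(a)\xi)\cdot x$, as required. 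The main obstacle I anticipate is justifying the associativity step $a(cx)=(ac)x$ inside a quasi $\x$-algebra where products of two generic elements of $\A$ need not be defined; this forces one to exploit that at least one factor lies in $\Ao$ and to invoke explicitly the compatibility of the $\Ao$-bimodule and right $\C$-module structures on $\A$. Once this compatibility is granted, the remaining density-plus-closability argument is routine.
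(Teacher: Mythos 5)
Your proposal is correct and follows essentially the same route as the paper: Lemma \ref{lem: 4 L} together with extension by continuity gives the Banach right $\C$-module structure on $\X_\vp$, and the identity $\ppi_\vp^\circ(a)(\Lambda_\vp(cx))=\Lambda_\vp(acx)=\Lambda_\vp(ac)x=[\ppi_\vp^\circ(a)\Lambda_\vp(c)]x$ gives $\C$-linearity on the core. The only difference is that the paper stops at $\ppi_\vp^\circ(a)$ on $\Lambda_\vp(\Ao)$, whereas your closedness-plus-density argument explicitly carries the $\C$-linearity over to $\ppi_\vp(a)$ on all of $\D_\vp$, which is a legitimate (and arguably needed) refinement rather than a different approach.
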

\begin{proof}
By Lemma	\ref{lem: 4 L}, $\A/N_\vp[\|\cdot\|_\vp]$ is a normed right C*-module over $\C$. The right multiplication by an element of $\C$ can be extended by continuity to the completion $\widetilde{\A}$ of $\A/N_\vp$, hence $\widetilde{\A}$ is a Banach right module over $\C$. Further, $\ppi_\vp^\circ(a)$ is a $\C$-linear operator for every $a\in\A$: \begin{multline*}
\ppi^\circ_\vp(a)(\Lambda_\phi(cx))=\Lambda_\vp(acx)=(\Lambda_\vp(ac))x\\=[\ppi^\circ_\vp(a)(\Lambda_\vp(c)]x,\quad\forall a\in\A, c\in\Ao, x\in\C.\qedhere
\end{multline*}
\end{proof}
 \begin{defn}
	The positive sesquilinear $\C$-valued map $\vp$ on $\A \times \A$ is called {\em admissible} if, for every $a\in\A$, there exists some $\gamma_a>0$ such that \begin{equation*}\label{eq: N_vp right module str}\|\vp(ac,ac)\|_\C\leq\gamma_a\|\vp(c,c)\|_\C, \quad \forall c\in \Ao.\end{equation*}
\end{defn}
\begin{rem}
If $\vp\in\QA$ is admissible, then the *-representation $\ppi_\Phi$ constructed from $\Phi$ is bounded. Indeed: $$\|\ppi_\vp(a)\Lambda_\vp(x)\|_\vp^2=\|\vp(ax,ax)\|_\C\leq\gamma_a\|\vp(x,x)\|_\C=\gamma_a\|\Lambda_\vp(x)\|_\vp^2, $$ for every $a\in\A, x\in\C$.
\end{rem}

\begin{cor}Let  $\vp$ be a $\C$-linear form in $\QA$ and  $\A$ be also a right module over $\C$.  Then,  the quasi $B_\C$-space $\X_\vp$ in Theorem \ref{thm_rep} is also a right Hilbert module over $\C$ and $\ppi_\vp(a)$ is a $\C$-linear operator for all $a\in\A$.  
\end{cor}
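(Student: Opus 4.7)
The strategy is to upgrade the conclusion of Corollary~\ref{cor: right Banach module} by showing that, under the stronger hypothesis of $\C$-linearity of $\vp$, the quotient $\A/N_\vp$ carries not merely a $\C$-valued quasi inner product but a genuine pre-Hilbert $\C$-module structure that extends by continuity to the completion $\X_\vp$.

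First I would invoke the Remark following Definition~\ref{def_Clinear}: since $\vp$ is $\C$-linear, property \eqref{eq: N_vp right module} holds automatically, so Corollary~\ref{cor: right Banach module} applies. This already yields that $\X_\vp$ is a Banach right module over $\C$ and that every $\ppi_\vp(a)$ is $\C$-linear on $\DD_\vp$, so the second assertion is obtained for free. The remaining task is to show that the $\C$-valued quasi inner product on $\A/N_\vp$ is in fact an honest Hilbert-module inner product, which is the content of \cite[Definition 1.2.1]{MT}.

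Next I would check $\C$-linearity in the second argument of $\langle\cdot,\cdot\rangle_\vp$ on the quotient. Since $\vp(a,bx)=\vp(a,b)x$ for all $a,b\in\A$ and $x\in\C$, and since $\Lambda_\vp(b)x=\Lambda_\vp(bx)$ (verified in the proof of Lemma~\ref{lem: 4 L}), the formula
\[
\langle \Lambda_\vp(a),\Lambda_\vp(b)x\rangle_\vp=\langle\Lambda_\vp(a),\Lambda_\vp(b)\rangle_\vp\,x
\]
is well defined on $\A/N_\vp$. Positivity and the Hermitian symmetry $\langle u,v\rangle_\vp=\langle v,u\rangle_\vp^*$ are inherited from the corresponding properties of $\vp$, while faithfulness on the quotient is built into the definition of $N_\vp$. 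Invoking the Cauchy--Schwarz inequality (Lemma~\ref{lem: Cauchy-Schwarz 1}, available because of $\C$-linearity), the Remark on p.~\pageref{vp norm} shows that $\|\cdot\|_\vp$ satisfies the ordinary triangle inequality on $\A/N_\vp$, so it is a genuine norm, and $(\A/N_\vp,\langle\cdot,\cdot\rangle_\vp)$ is a pre-Hilbert $\C$-module.

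Finally I would extend $\langle\cdot,\cdot\rangle_\vp$ to the completion $\X_\vp=\widetilde{\A}$ by continuity. The joint continuity needed here is the same one used in Remark~\ref{rem: joint cont}, which followed from Lemma~\ref{lemma 1}, but now one needs the sharper Cauchy--Schwarz bound (Lemma~\ref{lem: Cauchy-Schwarz 1}) to ensure that the extended map preserves the identity $\|\langle u,u\rangle_\vp\|_\C=\|u\|_\vp^2$. The right $\C$-module action was already extended continuously to $\widetilde{\A}$ in Corollary~\ref{cor: right Banach module}, and a routine limit argument transfers $\C$-linearity in the second variable, positivity and Hermitian symmetry from $\A/N_\vp$ to its completion. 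Faithfulness is immediate because $\|u\|_\vp=0$ forces $u=0$ in $\X_\vp$. Thus $\X_\vp$ becomes a right Hilbert $\C$-module, completing the proof.

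The only point requiring genuine care, rather than bookkeeping, is the continuous extension of the inner product from $\A/N_\vp$ to its completion while preserving $\C$-linearity and the norm-square identity; everything else is essentially a verification that the algebraic structure on the quotient survives passage to the limit.
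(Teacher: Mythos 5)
Your proposal is correct and follows essentially the same route as the paper, which compresses the whole argument into a one-line appeal to Lemma \ref{lem: Cauchy-Schwarz 1} (relying implicitly on the earlier remark that $\C$-linearity makes $\A/N_\vp$ a pre-Hilbert $\C$-module and on Corollary \ref{cor: right Banach module} for the module action and the $\C$-linearity of $\ppi_\vp(a)$). Your version simply spells out the details the paper leaves to the reader.
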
\begin{proof}If $\vp$ is $\C$-linear, then the thesis follows from Lemma \ref{lem: Cauchy-Schwarz 1}.\end{proof}

 As an application of what we have seen until now, if $\A$ is a *--algebra with unit
$\id$ every  positive  linear $\C$-valued map $\omega$  on $\A$ (i.e., such that
$\omega(a^*a) \in\C^+$, for all $a \in \A$) is representable.
\begin{cor}\label{moreGNS alg}  Let $\A$ be a *--algebra with unit
$\id$ and let $\omega$ be a positive  linear $\C$-valued map on $\A$. Then, there
exists a quasi $B_\C$-space $\X_\vp$ whose quasi norm is induced by a $\C$-valued quasi inner product $\ip{\cdot}{\cdot}_{\X_\vp}$, a dense subspace $\D_\omega\subseteq\X_\vp$ and a closed cyclic *--representation $\ppi_\omega$ of $\A$ with domain $\D_\omega$, such that $$\omega(b^*ac)=\ip{\ppi_\omega(a) \Lambda_\omega(c)}{\Lambda_\omega(b)}_{\X_\vp},\quad\forall a,b,c\in\A.$$ Moreover, there exists  a  cyclic vector $\eta_\omega$,
such that $$ \omega(a)=\ip{{\ppi}_\omega(a)\eta_\omega}{\eta_\omega}_{\X_\vp}, \quad \forall \ a \in \A.$$ 
The representation is unique up to unitary equivalence.\end{cor}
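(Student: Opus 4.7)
The strategy is to define $\vp(a,b) := \omega(b^*a)$ on $\A \times \A$ and invoke Theorem \ref{thm_rep} applied to the (trivial) quasi *-algebra $(\A, \A)$, i.e., with the full algebra itself in the role of $\Ao$. The cyclic vector for $\omega$ will emerge as $\eta_\omega := \Lambda_\omega(\id)$.

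The hypotheses of Theorem \ref{thm_rep} are quickly verified. Positivity $\vp(a,a) = \omega(a^*a) \in \C^+$ holds by assumption on $\omega$, and sesquilinearity is immediate from the linearity of $\omega$. Invariance in the sense of Definition \ref{px} reduces to the identity $\omega(d^*(ac)) = \omega((a^*d)^*c)$, which is a tautology; hence $\vp \in \QA$. Moreover, choosing $\Ao = \A$ makes $\Lambda_\vp(\Ao) = \Lambda_\vp(\A)$ automatically dense in $\widetilde{\A}$, so condition (i) of Proposition \ref{prop_nonsing} is free and $\vp \in \IA$.

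Theorem \ref{thm_rep} then produces a quasi $B_\C$-space $\X_\vp$, a dense subspace $\D_\vp$, and a closed cyclic *-representation $\ppi_\vp : \A \to \mathcal{L}^\dag(\D_\vp, \X_\vp)$ with cyclic vector $\xi_\vp = \Lambda_\vp(\id)$, satisfying $\ip{\ppi_\vp(a)\xi_\vp}{\ppi_\vp(b)\xi_\vp}_{\X_\vp} = \vp(a,b)$. Renaming $\ppi_\omega := \ppi_\vp$, $\Lambda_\omega := \Lambda_\vp$, and using $\ppi_\vp(c)\xi_\vp = \Lambda_\vp(c)$ together with the partial multiplicativity $\ppi_\vp(ac) = \ppi_\vp(a)\mult\ppi_\vp(c)$ recorded in the proof of Theorem \ref{thm_rep}, I would obtain
$$\omega(b^* a c) = \vp(ac, b) = \ip{\ppi_\omega(a)\Lambda_\omega(c)}{\Lambda_\omega(b)}_{\X_\vp}, \quad a,b,c \in \A,$$
which is the first assertion. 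Setting $\eta_\omega := \Lambda_\omega(\id)$ and specialising $b = c = \id$ yields $\omega(a) = \ip{\ppi_\omega(a)\eta_\omega}{\eta_\omega}_{\X_\vp}$, and the cyclicity of $\eta_\omega$ transfers from that of $\xi_\vp$. Uniqueness up to unitary equivalence is precisely Proposition \ref{prop: uniq repre}.

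The only subtlety worth flagging is that $\A$ is here only a bare unital *-algebra, whereas Theorem \ref{thm_rep} was stated for a locally convex quasi *-algebra. Inspecting that proof shows, however, that no topology on $\A$ is used in an essential way: the representation is built entirely on the quotient $\A/N_\vp$ equipped with its intrinsic quasi norm $\|\cdot\|_\vp$, and the only external hypothesis, namely density of $\Lambda_\vp(\Ao)$ in $\widetilde{\A}$, is forced by the choice $\Ao = \A$. Hence the reduction goes through with no further topological input required.
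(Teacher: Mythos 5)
Your proposal is correct and follows essentially the same route as the paper: define $\vp(a,b)=\omega(b^*a)$, take $\Ao=\A$ so that density of $\Lambda_\vp(\A)$ in the completion is automatic, and apply Theorem \ref{thm_rep} together with Proposition \ref{prop: uniq repre}, with $\eta_\omega=\Lambda_\vp(\id)$ as cyclic vector. Your remark that no topology on $\A$ is actually needed is a point the paper passes over silently, but it does not change the argument.
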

\begin{proof}
The thesis can be proved by applying Theorem \ref{thm_rep} and Proposition \ref{prop: uniq repre} to $\vp:\A\times\A\to\C$ defined as $\vp(a,b)=\omega(b^*a)$, for all $a,b\in\A$,  considering $\Ao=\A$. 
Indeed, $\vp$ is positive and invariant: $\vp(a,a)=\omega(a^*a)\in\C^+$ and $\vp(ac,d)=\omega(d^*(ac))=\omega((a^*d)^*c)=\vp(c,a^*d)$ for all $a,c,d\in\A$ and naturally $\Lambda_\vp(\A)=\A/N_\vp$ is dense in its completion.  
\end{proof}

\begin{rem}
If in addition $\A$ is a $\C$-bimodule and $$\|\omega(x^*a^*ax)\|_\C\leq\|\omega(a^*a)\|_\C \|x\|_\C^2, \quad\forall a\in\A, x\in\C$$then $\X_\vp$ is a right quasi Banach module over $\C$ and $\ppi_\omega(a)$ is $\C$-linear for all $a\in\A$.
\end{rem}

\begin{rem}
If in addition $\A$ is a $\C$-bimodule and  $$\omega(ax)=\omega(a)x, \quad\forall a\in\A, x\in\C$$then $\X_\vp$ is a right Hilbert $\C$-module and $\ppi_\omega(a)$ is $\C$-linear for all $a\in\A$.
\end{rem}
The following corollary gives a result of *-representability of $\C$-valued bounded linear positive map on $(\A,\Ao)$.
\begin{cor}\label{cor: 3.12}
	Let $(\A[\|\cdot\|],\Ao)$ be a unital normed quasi *-algebra and $\omega$ be a $\C$-valued bounded linear positive map on $(\A,\Ao)$ ($\omega(c^*c)\geq0$, for every $c\in\Ao$). If there exists $M>0$ such that $\|\omega(d^*c)\|_\C\leq M\|c\|\|d\|$, for all $c,d\in\Ao$, then   there
	exists a quasi $B_\C$-space $\X_\vp$ whose quasi norm is induced by a $\C$-valued quasi inner product $\ip{\cdot}{\cdot}_{\X_\vp}$, a dense subspace $\D_\omega\subseteq\X_\vp$ and a closed cyclic  *--representation $\ppi_\omega$ of $(\A,\Ao)$  with domain $\D_\omega$ and   cyclic vector $\eta_\omega$,
	such that $$ \omega(a)=\ip{{\ppi}_\omega(a)\eta_\omega}{\eta_\omega}_{\X_\vp}, \quad \forall \ a \in \A,$$ and $$\omega(b^*ac)=\ip{\ppi_\omega(a) \Lambda_\omega(c)}{\Lambda_\omega(b)}_{\X_\vp},\quad\forall a\in\A,\,\,\forall b,c\in\Ao.$$ 
	 The representation is unique up to unitary equivalence.
\end{cor}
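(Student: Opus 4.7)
The strategy is to reduce to Theorem \ref{thm_rep} by associating to $\omega$ a bounded invariant positive $\C$-valued sesquilinear map on $\A\times\A$. First, I would define $\vp_0:\Ao\times\Ao\to\C$ by $\vp_0(c,d):=\omega(d^*c)$. By hypothesis $\|\omega(d^*c)\|_\C\leq M\|c\|\|d\|$, so $\vp_0$ is jointly continuous. Since $\Ao$ is dense in $\A$ by (lc3) and $\C$ is complete, $\vp_0$ extends uniquely to a jointly continuous sesquilinear map $\vp:\A\times\A\to\C$ with $\|\vp(a,b)\|_\C\leq M\|a\|\|b\|$; moreover, whenever $d\in\Ao$ and $a\in\A$, one has $\vp(a,d)=\omega(d^*a)$, where $d^*a$ is computed via the bimodule action.

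Next I would verify $\vp\in\QA$. Positivity follows from $\vp_0(c,c)=\omega(c^*c)\in\C^+$ for $c\in\Ao$, together with closedness of $\C^+$ and the continuous extension. The only nontrivial algebraic point is the invariance $\vp(ac,d)=\vp(c,a^*d)$ for $a\in\A$ and $c,d\in\Ao$. Here I would work directly at the algebra level: associativity in $(\A,\Ao)$ gives $d^*(ac)=(d^*a)c$ in $\A$, and $(d^*a)^*=a^*d$ gives $d^*a=(a^*d)^*$; hence $d^*(ac)=(a^*d)^*c$ as elements of $\A$, and applying $\omega$ together with the identity $\vp(x,y)=\omega(y^*x)$ (valid whenever $y\in\Ao$ and $x\in\A$) yields the invariance. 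Since $\vp$ is bounded on $\A\times\A$, Corollary \ref{3.1.13 book} gives $\vp\in\IA$.

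Finally, apply Theorem \ref{thm_rep} to obtain $(\ppi_\vp,\Lambda_\vp,\X_\vp)$ and put $\ppi_\omega:=\ppi_\vp$, $\Lambda_\omega:=\Lambda_\vp$, $\eta_\omega:=\Lambda_\vp(\id)$. Since $\id\in\Ao$, $\eta_\omega$ is a cyclic vector for $\ppi_\omega$. For $a\in\A$ and $b,c\in\Ao$, the identity $\ppi_\omega(a)\Lambda_\omega(c)=\Lambda_\vp(ac)$ built into the construction in Theorem \ref{thm_rep} gives
$$ \ip{\ppi_\omega(a)\Lambda_\omega(c)}{\Lambda_\omega(b)}_{\X_\vp}=\vp(ac,b)=\omega(b^*ac), $$
and specializing $b=c=\id$ together with $\vp(a,\id)=\omega(a)$ produces the scalar formula. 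Uniqueness up to unitary equivalence is then inherited from Proposition \ref{prop: uniq repre}. The main obstacle is the definition of $\vp$ on $\A\times\A$ and the verification that invariance survives the continuous extension: the cleanest way to avoid delicate continuity questions in the $\A$-variable of the bimodule action is to derive invariance at the level of $\omega$ via associativity, as above, rather than by passing to the limit in the sesquilinear form directly.
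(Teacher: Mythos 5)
Your proposal matches the paper's proof in all essentials: both define $\vp_0(c,d)=\omega(d^*c)$ on $\Ao\times\Ao$, extend it by continuity to $\A\times\A$ using the bound $M\|c\|\|d\|$, verify positivity and invariance, obtain density of $\Lambda_\vp(\Ao)$ from boundedness, and then invoke Theorem \ref{thm_rep} together with the uniqueness statement. The only (cosmetic) difference is that the paper proves invariance by passing to the limit in $\vp(c_nb,d)=\vp(b,c_n^*d)$ along $c_n\to a$, while you derive it from associativity plus the identity $\vp(x,y)=\omega(y^*x)$; note that you use that identity in both configurations (first argument in $\Ao$, second in $\A$, and vice versa), and establishing it in either configuration requires exactly the same continuity of the module multiplications that the paper's limit argument uses, so nothing is actually avoided.
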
\begin{proof}
Define $\vp_0:(b,c)\in\Ao\times\Ao\to\vp_0(b,c)=\omega(c^*b)\in\C$. Then $\vp_0$ is a  bounded  positive sesquilinear $\C$-valued map on $\Ao\times\Ao$ and $$\vp_0(bc,d)=\vp_0(c,b^*d),\quad \forall b,c,d\in\Ao.$$ Since $\Ao$ is dense in $\A$, it is easy to prove that $\vp_0$ can be extended by continuity, to a  bounded  positive sesquilinear $\C$-valued map on $\A\times\A$. Hence $\Lambda_\vp(\Ao)$ is dense in $\A/N_\vp$ since $\vp$ is bounded and  $\Ao\subset\A$ densely. If $a\in\A$ and $\{c_n\}\subset\Ao$ with $c_n\to a$ as $n\to\infty$, then also $c_n^*\to a^*$   as $n\to\infty$; since $(\A,\Ao)$ is a normed quasi *-algebra we have also:$$c_n b\to ab, \mbox{ and } c_n^*c\to a^*c,\quad n\to\infty,\,\, b,c\in\Ao.$$ Hence, since $\vp$ is bounded, we get that $\vp$ is invariant because\begin{equation*}
	\vp(ab,d)=\lim_{n \to\infty}\vp(c_nb,d)=\lim_{n \to\infty}\vp(b,c_n^*d)=\vp(b,a^*d).
	\end{equation*} Therefore, by Theorem \ref{thm_rep}, $\vp$ is *-representable. Finally, if $a\in\A$ and $\{c_n\}\subset\Ao$ with $c_n\to a$ as $n\to\infty$ then \begin{equation*}
	\omega(a)=\lim_{n \to\infty}\omega(c_n)=\lim_{n \to\infty}\vp_0(c_n,\id)=\vp(a,\id).
\end{equation*} As for the uniqueness of the *-representation, it follows from Theorem \ref{thm_rep}.\end{proof}

\begin{rem}Assume that $\omega$ satisfies the assumptions of Corollary \ref{cor: 3.12}. Then, if $a\in\A$, for every sequence $\{c_n\}_n\subseteq\Ao$ with $c_n\to a$ as $n\to \infty$, we have that $$4\|\omega(\id)\|_\C\,\lim_{n \to\infty}\|\omega(c_n^*c_n)\|_\C\geq\|\omega(a)\|_\C^2.$$ Indeed, since $\|\omega(d^*c)\|_\C\leq M \|c\|\|d\|$ for all $c,d\in \Ao$,  then it is not hard to see that $\{\omega(c_n^*c_n)\}$ is a Cauchy sequence in $\C$, hence $\lim_{n \to\infty}\|\omega(c_n^*c_n)\|_\C$ exists. Moreover, by the boundedness of $\omega$, we have that $$\lim_{n \to\infty}\omega(c_n)=\omega(a).$$ By Corollary \ref{stinesp} we obtain that $$4\|\omega(\id)\|_\C\,\|\omega(c_n^*c_n)\|_\C\geq\|\omega(c_n)\|_\C^2, \quad\forall n\in\mathbb{N}.$$By taking the limits on both sides of the previous inequality, we get the desired one.\end{rem}
\begin{ex}
	Let $W$ be a positive operator in $L^\infty(\rho)$ and $f_t(W)$ be as in our previous examples. Since \begin{equation*}
		|\rho(Xf_t(W))|\leq\|X\|_p\|f_t(W)\|_{\frac{p-1}{p}}\leq\|X\|_p\rho(\mathbb{I})\|f_t(W)\|_\infty,\quad \forall X\in L^p(\rho)
	\end{equation*}it is not hard to see that the map $$\omega: A\in L^p(\rho)\to \omega(A):=\rho(Af_t(W))\in C([0,\|W\|])$$ is a well-defined ($\|(f_{t_1}-f_{t_2})(W)\|_\infty=|t_1-t_2|$)  bounded linear positive map on $(L^p(\rho),L^\infty(\rho))$ with values on the C*-algebra $C([0,\|W\|])$ and \begin{multline*}
	\|\omega(X^*Y)\|_\C\leq\|X\|_p\|Y\|_p\sup_{t\in[0,\|W\|]}\|f_t(W)\|_\frac{p}{p-2}\\=\|X\|_p\|Y\|_p\sup_{t\in[0,\|W\|]}\|f_t(W)\|_\infty\rho(\mathbb{I})\\=\|X\|_p\|Y\|_p\|W\|\rho(\mathbb{I}), \quad \forall X,Y\in L^\infty(\rho).
		\end{multline*} Similarly, given  $A_t\in L^2([0,\|W\|],\BH)$ such that $A_t\geq0$ for a.e. $t\in [0,\|W\|]$, we can consider the map $\Omega: L^p(\rho)\to\B(\H)$ given by $$\Omega(X)=\int_0^{\|W\|}\rho(Xf_t(W))A_t dt.$$Here we consider Gel'fand-Pettis integral and $A_t\in L^2([0,\|W\|],\B(\H))$. Since, for each $h\in\H$ with $\|h\|\leq1$
		\begin{multline*}\int_0^{\|W\|}\left|\rho(Xf_t(W))\right|^2\| A_t h\|^2dt\\\leq\|X\|_p\rho(\mathbb{I})\|W\|_\infty\int_0^{\|W\|}\hspace{0em} \|A_th\|^2dt \\\leq\|X\|_p\rho(\mathbb{I})\|W\|_\infty\vertiii{A_t}_2^2,
	\end{multline*}  it follows that $\rho(Xf_t(W))   A_t\in L^2([0,\|W\|],\B(\H)) $, for all $X\in L^p(\rho)$. Moreover, for all $h_1,h_2\in \H$ with $\|h_i\|\leq1$, $i\in\{1,2\}$, we obtain that \begin{multline*}
	\left|\ip{\int_0^{\|W\|}\rho(X^*Yf_t(W))A_tdt\, h_1}{h_2}\right|=\left|\int_0^{\|W\|}\ip{\rho(X^*Yf_t(W))A_t\, h_1}{h_2}dt\right|\\\leq\int_0^{\|W\|}\left|\ip{\rho(X^*Yf_t(W))A_t\, h_1}{h_2}\right|dt\\
	\leq\int_0^{\|W\|}\left\|\rho(X^*Yf_t(W))A_t\, h_1\right\|\|h_2\|dt\\	\leq\left(\int_0^{\|W\|}\|\rho(X^*Yf_t(W))A_t\, h_1\|^2dt\right)^{1/2}\|W\|^{1/2}\\\leq \|X\|_p \|	Y\|_p\|W\|\rho(\mathbb{I})\left(\int_0^{\|W\|}\|A_t h_1\|^2 dt\right)^{1/2}\|W\|^{1/2}\\\leq\|X\|_p \|	Y\|_p\|W\|^{3/2}\rho(\mathbb{I})\vertiii{A_t}_2,\end{multline*} hence, $$\left\|\int_0^{\|W\|}\rho(Xf_t(W))A_t\,dt\right\|\leq\|X\|_p \|	Y\|_p\|W\|^{3/2}\rho(\mathbb{I})\vertiii{A_t}_2, \,\,\forall X,Y\in L^p(\rho).$$It follows that $\Omega$ satisfies the conditions of Corollary \ref{cor: 3.12}.
\end{ex}
\bigskip
Let $\Phi \in \IA$ and $\vartheta$ a state on $\C$. Then $\phi:= \vartheta \circ \Phi$, (i.e., $\phi(a,b)=\vartheta(\Phi(a,b))$, for every $a,b\in \A$) is an invariant positive sesquilinear {\em form} on $\A \times \A$.
We have 
$$N_\phi=\{a\in \A: \, \phi(a,a)=0\}= \{a\in \A; \, \phi (a,a)\in N_\vartheta\}.$$
Moreover we have
$$ |\phi(a,a)|=|\vartheta(\Phi(a,a))|\leq \|\Phi(a,a)\|_\C.$$
This implies that the map
$$T_{\phi,\Phi}:\Lambda_\Phi(a) \in \X_\Phi \to \lambda_\phi (a) \in \H_\phi$$ is well-defined and bounded, where $\H_\phi$ is the Hilbert space $\H_\phi$, completion of $\A/N_\phi$ with respect to $\|\cdot\|_\phi$, with $\lambda_\phi (a)$  the coset containing $a$, $\|\lambda_\phi (a)\|_\phi= \phi(a,a)^{1/2}$.
From this we deduce that $\lambda_\phi (\Ao)$ is dense in  $\H_\phi$. Thus, a GNS *-representation constructed from  the  invariant positive sesquilinear (ips-)form $\phi$ is possible (see \cite[Proposition 2.4.1]{FT_book}). Let us denote it by $\pi_\phi$.
Then, we have for $a\in \A$ and $b\in \Ao$,
$$\pi_\phi(a)\lambda_\phi (b)=\lambda_\phi(ab)= T_{\phi,\Phi}\Lambda_\Phi(ab)= T_{\phi,\Phi}\Pi_\Phi(a)\Lambda_\Phi(b). $$
On the other hand,
$$ \pi_\phi(a)\lambda_\phi (b)= \pi_\phi(a)T_{\phi,\Phi}\Lambda_\Phi (b).$$
Therefore $$T_{\phi,\Phi}\Pi_\Phi(a)= \pi_\phi(a)T_{\phi,\Phi}, \quad \forall a\in \A.$$
Hence $\pi_\phi$ and $\Pi_\Phi$ are intertwined with bounded intertwining operator $T_{\phi,\Phi}$, (see \cite[Definition 1.3.1]{AnTr2014}).

\bigskip
{\bf{Acknowledgements:} } GB and CT acknowledge that this work has been done within the activities of Gruppo UMI Teoria dell’Approssimazione e Applicazioni and of GNAMPA of the INdAM.

\bibliographystyle{amsplain}

\begin{thebibliography}{99}
\bibitem{ait_book} Antoine  J.-P., Inoue A. A., Trapani C., {\em Partial *-algebras
and their Operator Realizations}, Kluwer, Dordrecht, (2002).
\bibitem{AnTr2014}  Antoine J.-P., 
Trapani C., {\em Metric operators, generalized hermiticity and lattices
of hilbert spaces}, in {\em Non-selfadjoint operators in quantum
physics}, F. Bagarello, J.P. Gazeau, F.H. Szafraniek and M. Znoijl
(eds.), pp.  345-402, John Wiley and Sons(2015)
\bibitem{BB} Blackadar B., {\em Operator Algebras: Theory of C*-Algebras and von Neumann Algebras}, Encyclopaedia of Mathematical Sciences, {vol. 122}, Operators algebras III, Springer, Berlin, Heidelberg (2006).
\bibitem{FT_book} Fragoulopoulou M.,  Trapani C., {\em Locally Convex Quasi *-algebras and their Representations}. Lecture Notes in Mathematics 2257, Springer (2020).
\bibitem{Jocic} D.R. Joci\'{c},
{\em	Cauchy–Schwarz norm inequalities for weak*-integrals of operator valued functions},
Journal of Functional Analysis,
{\bf 218}, 318-346  (2005).
\bibitem{Lance} E.C. Lance, {\em Hilbert C*-modules: A toolkit for operator algebraists}.  London Mathematical Society Lecture Note Series vol. 210, Cambridge University Press (1995).
\bibitem{MT} V. Manuilov and E.V. Troitsky, {\em Hilbert C*-modules} AMS Translations Math. Monographs vol. 226, (2005).
\bibitem{paschke} W.L. Paschke, {\it Inner product modules over $B^*$-algebras},  {\em Transactions of the American Mathematical Society}, {\bf 182}, 443-468  (1973).

\bibitem{powers} R.T. Powers, {\it Self-Adjoint Algebras of Unbounded Operators}, {Commun. math. Phys.} {\bf 21}, 85--124 (1971) .

\bibitem{stormer} E. St{\o}rmer, {\it Positive linear maps on Operator algebras}, Springer Monographs in Mathematics, Heidelberg, 2013.

\end{thebibliography}

\end{document}